\documentclass[reqno]{amsart}
\usepackage[T1]{fontenc}
\usepackage[utf8]{inputenc}
\usepackage[hang]{caption}

\usepackage{enumerate,amsmath,amssymb}
\usepackage{xcolor, hyperref,fullpage, tikz}
\addtolength{\oddsidemargin}{.5in}
\addtolength{\evensidemargin}{.5in}
\addtolength{\textwidth}{-1in}

\usepackage{footnote}

\theoremstyle{plain}      
 
\newtheorem{thm}{Theorem}[section]     
\newtheorem{theorem}[thm]{Theorem}    
     
\newtheorem{corollary}[thm]{Corollary}     
     
\newtheorem{lemma}[thm]{Lemma}

\theoremstyle{definition}

\newtheorem{definition}[thm]{Definition}
\newtheorem{notation}[thm]{Notation}   

\newtheorem{remark}[thm]{Remark}
\newtheorem{example}[thm]{Example}

\renewcommand{\epsilon}{\varepsilon}
\let\theta\vartheta
\let\phi\varphi

\let\<\langle
\let\>\rangle

\let\doba\mathbb
\def\mC{\doba{C}}
\def\mH{\doba{H}}

\def\mR{\doba{R}}
\def\mS{\doba{S}}

\def\scal{{\mathop{\rm scal}}}
\def\Ric{{\mathop{\rm Ric}}}

\def\Spin{{\mathop{\rm Spin}}}
\def\SO{{\mathop{\rm SO}}}
\def\Si{\Sigma}

\def\vol{{\mathop{\rm vol}}}
\def\Id{\operatorname{Id}}

\let\pa\partial

\let\la\lambda

\gdef\Id{\mathrm{Id}}

\let\C\mC
\let\R\mR


\newcommand{\Q}{{Q^{*}}}


\newcommand{\vo}{\mathrm{dvol}}
\newcommand{\ds}{\mathrm{ds}}
\renewcommand{\d}{\mathrm{d}}

\renewcommand{\Re}{\mathrm{Re}}


\setlength{\parindent}{0cm}

\usepackage{stmaryrd}   
\newcommand{\kuno}{\varowedge}
\begin{document}
\makeatletter
\newtheorem*{rep@theorem}{\rep@title}
\newcommand{\newreptheorem}[2]{%
\newenvironment{rep#1}[1]{%
 \def\rep@title{#2 \ref{##1}}%
 \begin{rep@theorem}}%
 {\end{rep@theorem}}}
 \makeatother
\newreptheorem{corollary}{Corollary}

\title{Positive mass theorem for some asymptotically hyperbolic manifolds}

\author{Bernd Ammann} 
\address{Fakult\"at f\"ur Mathematik, Universit\"at Regensburg, 93040
Regensburg, Germany}
\email{bernd.ammann@mathematik.uni-regensburg.de}

\author{Nadine Gro\ss e} 
\address{Institut f\"ur Mathematik, Universit\"at Leipzig, 04109 Leipzig,
Germany}
\email{grosse@math.uni-leipzig.de}

\subjclass[2010]{ 53C21 (Primary) 53C27 (Secondary)}


\keywords{positive mass, hyperbolic spaces, spin}

\begin{abstract}
We  prove a positive mass theorem for some noncompact spin manifolds that are asymptotic to products of hyperbolic space with a compact manifold. As conclusion we show the Yamabe inequality for some noncompact manifolds which are important to understand the behaviour of Yamabe invariants under surgeries.
\end{abstract}

\maketitle
  
\section{Introduction}

Let $(M, g)$ be a Riemannian manifold of dimension $m\geq 3$.  For compactly supported smooth functions $u\colon M\to \mR$ we define \[ Q^*_M (u):=\frac{\int_M uL_gu\, \vo_g}{\Vert u\Vert_p^2}\ \text{and}\ \Q(M,g):=\inf_u Q^*_M (u)\]
where $p=\frac{2m}{m-2}$ and $L_g=a_m\Delta_g+\scal_g$ is the conformal Laplacian, $a_m=4\frac{m-1}{m-2}$ . The quantity $\Q(M,g)$ is a conformal invariant -- the well-known Yamabe constant that was defined in order to tackle the Yamabe problem: Does every compact Riemannian manifold admit a conformal metric with constant scalar curvature?

One of the main step in the solution of the Yamabe problem is to prove the inequality $\Q(M^m)<\Q(\mS^m)$ for compact manifolds not  conformally diffeomorphic to the standard sphere. For $m\geq 6$ and $M$ not conformally flat, this was proven with a test function supported in an arbitrarily small ball around a point with non-vanishing Weyl curvature, see \cite[p.~292]{Aubin}. This argument also holds for  noncompact manifolds that are not conformally flat in the case $m\geq 6$. 

The remaining cases for compact $M$ were solved by Schoen in  \cite{schoen_84} using the positive mass theorem \cite{schoen_79, schoen_yau_88}.
It is natural to ask whether this also holds for noncompact manifolds. The special case of $\mS^n\times \mH_c^{k+1}$ was needed in \cite{ammann.grosse:p13b}. 
In the present article we prove 

 \begin{corollary}\label{repcor}Let $m=n+k+1$, $m\geq 3$, $k>0$, and $c\in [0,1)$. Then
\[\Q(\mS^n\times \mH_c^{k+1},\sigma^n+g_c)<\Q(\mS^m, \sigma^m).\]
\end{corollary}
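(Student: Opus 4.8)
The plan is to follow Schoen's route from the positive mass theorem to the Yamabe inequality \cite{schoen_84}, inserting the positive mass theorem proven above exactly where strict positivity of the mass is needed. Throughout write $(M,g):=(\mS^n\times\mH_c^{k+1},\sigma^n+g_c)$, $\dim M=m$, and assume $n\ge 1$ (for $n=0$ the statement is an equality). Two preliminary reductions: first, one always has $\Q(M,g)\le\Q(\mS^m,\sigma^m)$ — localise the Aubin--Talenti test functions in a small metric ball and let them concentrate — so the content is strictness; second, if $\Q(M,g)\le 0$ there is nothing to prove, so we may assume $\Q(M,g)>0$, which makes $L_g$ a positive operator and produces a positive Green's function $G_p$ of $L_g$ at every $p\in M$, with the decay at infinity coming from the asymptotic geometry of $M$. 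I would also record that $M$ is spin (a sphere times a topologically Euclidean factor, and removing a point preserves this), and that $M$ is \emph{not} conformally flat precisely when $n\ge 2$: its two constant-curvature factors have sectional curvatures $1$ and $-c^2$, which are opposite only for $c=1$.

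Now split into cases. If $m\ge 6$ and $n\ge 2$, then $M$ is homogeneous with everywhere non-vanishing Weyl tensor, and the argument of Aubin recalled in the introduction — a test function supported in an arbitrarily small ball around a point of non-zero Weyl curvature — gives $\Q(M,g)<\Q(\mS^m,\sigma^m)$ directly. In the complementary cases ($m\le 5$ for any $n$, or $m\ge 6$ with $n\le 1$, where $M$ is conformally flat) I would blow up at a point: fix $p\in M$, set $\hat M:=M\setminus\{p\}$ and $\hat g:=G_p^{4/(m-2)}g$. Then $\scal_{\hat g}\equiv 0$ on $\hat M$; near $p$ the metric $\hat g$ is asymptotically Euclidean, with a mass $\mu(p)$ read off from the conformal-normal-coordinate expansion $G_p=\dist(\cdot,p)^{2-m}\bigl(1+\cdots\bigr)$; and near the original end of $M$ the factor $G_p^{4/(m-2)}$ decays, so there $(\hat M,\hat g)$ is asymptotic to (the conformal blow-up of) the hyperbolic-type model. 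Thus $(\hat M,\hat g)$ is a spin, scalar-flat manifold falling within the scope of the positive mass theorem proven above, so $\mu(p)\ge 0$; and $\mu(p)=0$ would force $(\hat M,\hat g)$ to coincide with the model, hence $M$ to be conformal to the compact manifold $\mS^m$ — impossible, since $M$ is noncompact. Therefore $\mu(p)>0$. Finally one invokes the standard test-function estimate: gluing the rescaled Euclidean bubble near $p$ to $\varepsilon^{(m-2)/2}G_p$ away from $p$ (and truncating at large distance) yields compactly supported $u_\varepsilon$ with
\[
Q^*_M(u_\varepsilon)\ \le\ \Q(\mS^m,\sigma^m)-c_m\,\mu(p)\,\varepsilon^{m-2}+o\bigl(\varepsilon^{m-2}\bigr),\qquad c_m>0,
\]
so $Q^*_M(u_\varepsilon)<\Q(\mS^m,\sigma^m)$ for small $\varepsilon$, whence $\Q(M,g)<\Q(\mS^m,\sigma^m)$.

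The step I expect to be the main obstacle is the blow-up: one must check carefully that $(\hat M,\hat g)$ — which carries a new asymptotically Euclidean end where $p$ was removed, attached to the conformally ``crushed'' hyperbolic-type end — genuinely satisfies the fall-off hypotheses of the present positive mass theorem at \emph{both} ends, and that the contributions of the two ends decouple so that nonnegativity of the relevant global invariant really forces $\mu(p)\ge 0$; pinning down the rigidity case is part of the same analysis. By contrast the Aubin reduction and the concluding gluing estimate are routine and can be imported from \cite{Aubin} and \cite{schoen_84}.
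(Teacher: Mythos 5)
Your overall plan --- Aubin's Weyl--tensor test function for $m\ge 6$, $n\ge 2$, and Schoen's Green--function test function driven by a positive mass theorem for the remaining cases --- is exactly the paper's route for $n\ge 2$. But the case $n=1$ is a genuine gap in your proposal, because you try to reach it with the positive mass theorem of this paper, and that theorem does not apply there.

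For $N=\mS^1$ one has $\scal_N=0$, so the hypothesis \eqref{cond_main_1} (equivalently \eqref{cond_main}) fails. More decisively, for $\mS^1\times\mH_c^{k+1}$ the bottom of the spectrum of $L_g$ is $d=\tfrac{c^2k^2}{4}+\tfrac{-c^2k(k+1)}{a_m}$, and since $n=1$ gives $m=k+2$ and $a_m=4(k+1)/k$, this evaluates to $d=0$; hence $L_g$ is \emph{not} invertible and there is no $L^2$ Green function $\Gamma$ on which to base the blow--up. This also exposes the false step in your preliminary reduction ``$\Q(M,g)>0$ implies $L_g$ is a positive operator and produces a positive Green function'': on a noncompact manifold, positivity of the Yamabe constant controls $\int uL_gu$ against $\|u\|_p^2$, not against $\|u\|_2^2$, and the $n=1$ model is precisely a case where $\Q>0$ while $0\in\Spec(L_g)$. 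The paper handles $n=1$ by an entirely separate explicit identity, $\Q(\mS^1\times\mH_c^{m-1},\sigma^1+g_c)=c^{2/m}\Q(\mS^m,\sigma^m)$, imported from the reference \cite{ammann.grosse:p13b}, which yields the strict inequality for $c\in[0,1)$ with no positive mass theorem at all. Without substituting some such argument, your proposal does not cover $n=1$.

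A secondary point of divergence: the paper does not literally ``blow up and apply an asymptotically Euclidean positive mass theorem to a two--ended manifold.'' Theorem~\ref{pos_gen} is formulated directly on the noncompact manifold at the marked point $p_0$, and the boundary term at infinity in Witten's integration by parts is shown to vanish using decay estimates for \emph{both} Green functions --- of $L_g$ \emph{and} of the Dirac operator --- under the balance condition \eqref{vgl}. These decay rates come from an ODE perturbation analysis together with the $\Spin(k+1)$--equivariance of the Green functions (Lemmas~\ref{green_symm}, \ref{GreenLapl}, \ref{GreenD}). None of this is visible in your sketch: the Dirac Green function does not appear at all, and the concern you flag about ``decoupling the two ends'' is not how the difficulty presents itself in the paper's framework.
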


To prove this corollary we have to establish for such manifolds with $m=3,4,5$ a positive mass theorem. 

\begin{theorem}\label{mainthm}
Let  $(Z^m=N^n\times \mH_c^{k+1}, g=g_{N}+ g_c)$ where $(\mH_c^{k+1}=\mS^k\times [0,\infty), g_c=\sinh_c(r)^2\sigma^k+dr^2)$ with $\sinh_c(r):= \frac{1}{c}\sinh(cr)$ for $c>0$, $\sinh_0(r):=r$ and $\sigma^k$ denotes the standard metric on $\mS^k$. Assume that $N^n$ is a closed Riemannian spin manifold with constant scalar curvature and assume that $m=3,4,5$. 
Assume that 
\begin{equation}\label{cond_main_1}  \scal_N> c^2 k\frac{n-1}{m-2}.\end{equation} Then the mass of $(Z,g)$ is nonnegative.

Moreover, if the mass of $(Z,g)$ is zero, then $(Z,g)=\mS_c^{m-1}\times \mR$ or $(Z,g)=\mS_c^{m-k-1}\times \mH_c^{k+1}$, $k> 0$ and $c\in (0,1]$, where $\mS_c^{n}$ is the rescaled standard sphere  with sectional curvature $c^2$.
\end{theorem}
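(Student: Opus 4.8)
The plan is to prove this by a spinorial (Witten-type) positive-mass argument, adapted to the hyperbolic factor. Both $N$ and $\mH_c^{k+1}$ are spin, so $Z=N\times\mH_c^{k+1}$ inherits a spin structure; I work on its spinor bundle $\Si Z$, whose sections split along the two factors. The hyperbolic factor carries imaginary Killing spinors $\phi$ with $\na^{\mH}_X\phi=\tfrac{ic}{2}\,X\cdot\phi$ (which force precisely the Einstein constant of curvature $-c^2$), and, since $\scal_N$ is a \emph{positive constant}, the Dirac operator $D^N$ of the closed manifold $N$ has its spectrum constrained by Friedrich's inequality $\mu^2\ge\tfrac{n}{4(n-1)}\scal_N$. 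The reference configuration at infinity is built from an imaginary Killing spinor on $\mH_c^{k+1}$ together with a $D^N$-eigenspinor on $N$, and one works with a suitably modified Dirac operator $\mathcal D$ — obtained by twisting the spin connection by $\tfrac{ic}{2}$ in the hyperbolic directions, as in the standard treatment of asymptotically hyperbolic positive mass theorems — adapted to this configuration. The mass of $(Z,g)$ is the boundary term at infinity in the associated Weitzenb\"ock identity.

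The core identity is the Schr\"odinger--Lichnerowicz formula $\mathcal D^{\ast}\mathcal D=\widehat\nabla^{\ast}\widehat\nabla+\mathcal R$, where $\widehat\nabla$ is the twisted connection. Using $\scal_g=\scal_N-c^2k(k+1)$ and expanding the cross-terms produced by the product structure and the Clifford action of the twist, the zeroth-order term $\mathcal R$ is, on the relevant spinors, bounded below by $\tfrac14\bigl(\scal_N-c^2k\tfrac{n-1}{m-2}\bigr)\Id$, the coefficient $k\tfrac{n-1}{m-2}$ emerging from the interplay of the hyperbolic Killing constant with Friedrich's bound on $N$. Thus hypothesis \eqref{cond_main_1} is exactly the condition $\mathcal R\ge 0$. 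It is also equivalent to the positivity $\Q(Z,g)>0$: the $L^2$-spectrum of $a_m\Delta_g$ on $N\times\mH_c^{k+1}$ starts at $a_m\tfrac{c^2k^2}{4}$, and $a_m\tfrac{c^2k^2}{4}+\bigl(\scal_N-c^2k(k+1)\bigr)=\scal_N-c^2k\tfrac{n-1}{m-2}$; so \eqref{cond_main_1} equally guarantees that the Green's function of the conformal Laplacian exists, should one prefer to phrase the mass through the conformal blow-up at an interior point (the formulation relevant to Corollary~\ref{repcor}, which as recalled in the introduction needs a positive mass theorem only for $m\le5$).

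Next I would set up weighted Sobolev spaces adapted to the end(s) of $Z$, show that $\mathcal D$ is Fredholm there with kernel controlled by the Weitzenb\"ock positivity above, and solve the boundary value problem: produce $\psi$ on $Z$ with $\mathcal D\psi=0$ asymptotic to the prescribed reference spinor, the error decaying fast enough that all boundary terms below converge. Integrating $\mathcal D^{\ast}\mathcal D=\widehat\nabla^{\ast}\widehat\nabla+\mathcal R$ over an exhaustion of $Z$ and passing to the limit, the boundary integral at infinity equals a positive universal constant times the mass of $(Z,g)$ (times the squared norm of the reference spinor at infinity), while the interior term is $\int_Z\bigl(|\widehat\nabla\psi|^2+\langle\mathcal R\psi,\psi\rangle\bigr)\,\vo_g\ge 0$. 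This gives the nonnegativity of the mass.

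Two points need genuine care. First, the weighted analysis together with the control of the \emph{second} end of the problem — the interior puncture if the mass is set up through a blow-up, or the far region where the hyperbolic factor dominates — where one must check that no stray boundary term survives; the decay needed there is again governed by \eqref{cond_main_1}. Second, and this is the main obstacle, the rigidity statement: vanishing mass forces $\widehat\nabla\psi\equiv0$ and $\langle\mathcal R\psi,\psi\rangle\equiv0$, and, since the construction runs for every admissible reference spinor, $Z$ carries a maximal family of $\widehat\nabla$-parallel spinors. Their integrability conditions force the $\mH_c^{k+1}$-directions to have constant curvature $-c^2$, force equality in Friedrich's inequality on $N$ — hence $N$ admits real Killing spinors and is the round sphere $\mS_c^n$ of curvature $c^2$ — and produce a de~Rham--type splitting identifying $(Z,g)$, according to whether a flat line or the full hyperbolic factor splits off, with $\mS_c^{m-1}\times\mR$ or $\mS_c^{m-k-1}\times\mH_c^{k+1}$; matching the stated parameter ranges ($k>0$, $c\in(0,1]$) is part of this last step.
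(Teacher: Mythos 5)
Your proposal takes a genuinely different route from the paper, and the central step as written does not go through.

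\textbf{What the paper actually does.} The mass in Theorem~\ref{mainthm} is the constant term $m_{p_0}$ in the expansion \eqref{green_exp} of the Green function of $L_g$ at an interior point. The proof (via Theorem~\ref{pos_gen}, Theorem~\ref{mainthm_2} and Example~\ref{ex_assump}) conformally blows up at $p_0$: with $\tilde g=u^{4/(m-2)}g$ one is in a scalar-flat, asymptotically Euclidean setting on $M\setminus\{p_0\}$, the ordinary (untwisted) Schr\"odinger--Lichnerowicz formula applies, and the only difficulty moved to the hyperbolic end is to show that the boundary integral there tends to zero. That is where the real work is: one decomposes the Green functions of $L_g$ and of $D^g$ into $(\Delta^N,\Delta^{\mS^k})$-- respectively $(D^N,D^{\mS^k})$--eigenmodes, converts the radial equations into perturbed linear ODE systems, and extracts decay rates from the Perron/Hartman--Wintner asymptotic integration theorem (Lemmas~\ref{GreenLapl},~\ref{GreenD}). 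Condition~\eqref{cond_main_1} is exactly invertibility of $L_g$, and Example~\ref{ex_assump} uses Friedrich's inequality to show that the decay-rate inequality~\eqref{cond_main} then holds automatically. Rigidity uses conformal flatness from Theorem~\ref{pos_gen}, the classification of conformally flat Riemannian products (Lemma~\ref{confflat_prod}), and a conformal recompactification argument.

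\textbf{Where the proposal breaks.} You instead attempt an asymptotically hyperbolic Witten argument directly at the end, twisting the spin connection by $\tfrac{ic}{2}X\cdot$ in the $\mH_c^{k+1}$-directions only, and you assert that the resulting Weitzenb\"ock identity $\mathcal D^*\mathcal D=\widehat\nabla^*\widehat\nabla+\mathcal R$ has a \emph{zeroth-order} $\mathcal R\ge\frac14\bigl(\scal_N-c^2k\tfrac{n-1}{m-2}\bigr)$. This is not the case: if the twist acts only on the hyperbolic directions of a genuine product $N\times\mH_c^{k+1}$, the commutators in the Lichnerowicz computation do not cancel, and one is left with first-order remainders proportional to $ic\,D^{\mH}$ and $ic\,D^{N}$ (schematically, $\mathcal D^2-\widehat\nabla^*\widehat\nabla=\tfrac14\scal_g-ick\,D^{\mH}-ic(k+1)D^{N}-\tfrac{c^2k(k+1)}{4}$). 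So the ``curvature term'' is not a pointwise endomorphism, no pointwise positivity criterion applies, and the coefficient $k\tfrac{n-1}{m-2}$ does not emerge from such a formula. You do correctly identify that \eqref{cond_main_1} is equivalent to $\inf\spec L_g>0$ (the algebra $a_m\tfrac{c^2k^2}{4}+\scal_N-c^2k(k+1)=\scal_N-c^2k\tfrac{n-1}{m-2}$ is right), but conflating that spectral threshold with positivity of a Weitzenb\"ock endomorphism is a genuine gap. A further, related gap: the mass in the statement is the Green-function mass at an interior point, and the proposal never actually ties the proposed boundary-at-infinity flux to that quantity; in the paper's route the two are the same thing by construction, because everything happens in the $\tilde g$-blow-up.

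\textbf{Secondary issues.} The weighted Fredholm theory you invoke is standard for conformally compact AH ends, but $N\times\mH_c^{k+1}$ with a closed $N$ of dimension $n>1$ is not conformally compact in the usual sense; the paper sidesteps this entirely by doing ODE asymptotics mode by mode. For the rigidity step you propose equality in Friedrich $\Rightarrow N=\mS_c^n$ plus a de Rham splitting; the conclusion is right, but the mechanism in the paper is different (conformal flatness of $M\setminus\{p_0\}$, Lemma~\ref{confflat_prod}, elimination of nontrivial quotients via the mass of a cover, and Liouville's theorem), and the case $(Z,g)=\mS_c^{m-1}\times\mR$ arises from a different branch of that classification rather than from a Killing-spinor splitting.

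In short: your identification of \eqref{cond_main_1} with invertibility of $L_g$ and your use of Friedrich's inequality are in the right spirit and indeed mirror Example~\ref{ex_assump}, but the proposed twisted Weitzenb\"ock identity does not exist in the form you need, so the nonnegativity step is unproven as written.
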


In Theorem~\ref{mainthm_2} we prove a more general version of the theorem above where we allow that are asymptotic to manifolds $Z$ as above assuming a certain symmetry and for arbitrary dimension $m\geq 3$. The basic idea of the proof follows Witten's arguments for proving the positive mass theorem for compact spin manifolds. In the proof we need some decay estimates for the Green functions which will be given in Section~\ref{Green_est_sec}.\\

{\bf Acknowledgment.} Most of the work on this article was done during the second author's stay at the University of Regensburg supported by the Graduiertenkolleg 'Curvature, Cycles, and Cohomology'. The second author thanks the institute for its hospitality.

\section{Preliminaries}

\subsection{Notations}

In the article  a spin manifold always means a manifold admitting a spin structure together with a fixed choice of spin structure. The notion of spin structures can be defined for arbitrary oriented manifolds, but as soon as we have a Riemannian metric it yields a spin structure in the sense of $\Spin (n)$-principal bundles.

For a Riemannian spin manifold $(M,g)$ we will always  write $\Sigma_M$ for the spinor bundle. The Dirac operator on $(M,g)$ is denoted by $D^g$.

Moreover, $\bar{B}_r(x)$ denotes the closed ball around $x\in M$ of radius $r$ w.r.t. the metric $g$.

Furthermore, $\mS^k$ always denotes the  sphere with sectional curvature $1$ and the corresponding metric is denoted by $\sigma^k$.

\subsection{Green functions and ADM mass}\label{subsec_Green}

In this section, we collect existence results for the Green function of the conformal Laplacian and the Dirac operator and give the definition of the mass which essentially goes back to Arnowitt, Deser and Misner.

\begin{theorem}
 Let  $(M^m, g)$ be a complete Riemannian spin manifold with positive injectivity radius. Assume that for an $r>0$ that is smaller than the injectivity radius there are constants $C_1,C_2>0$ with $C_1\leq \vol(B_r(x), g)\leq C_2$ for all $x\in M$. 
  Let the  Dirac operator $D^g$ be invertible. Then, $D^g$ posesses a unique Green function, i.e., there is a smooth section $G\colon M\times M\setminus  \Delta\to \Si_M\boxtimes \Si_M^*$ that is locally integrable and for any $x\in M$, $\psi_0\in \Si_M|_y$, and $\phi\in C_c^\infty(\Si_M)$
\[ \int_M \<G(y,x)\psi_0, D^g\phi(y)\>\, \d y=\<\psi_0, \phi(x)\>.\]
 Here $\Delta:=\{ (x,x)\in M\times M\}$ is the diagonal in $M\times M$.
\end{theorem}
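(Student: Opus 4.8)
The plan is to construct $G$ by a standard parametrix-plus-correction scheme. First I would fix $r>0$ below the injectivity radius and, for each $x\in M$, build a local parametrix: on the geodesic ball $B_r(x)$ the spinor bundle can be trivialized by radial parallel transport, and the Euclidean Dirac Green kernel $G_0(y,x)=c_m\,\frac{y-x}{|y-x|^m}\cdot(\,\cdot\,)$ pulled back through the exponential map, multiplied by a cutoff function $\chi$ supported in $B_r(x)$, gives a section $P(y,x)$ with $D^g_y P(\cdot,x)=\delta_x + E(\cdot,x)$, where the error $E(\cdot,x)$ is a bounded (indeed $L^\infty$, with the right support properties) section because differentiating the cutoff only hits the already-integrable singularity and the bundle-connection terms are lower order. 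The uniform volume bounds $C_1\le \vol(B_r(x))\le C_2$ and the positive injectivity radius are exactly what make all these constructions uniform in $x$, so that $y\mapsto E(y,x)$ and $x\mapsto E(y,x)$ lie in suitable $L^p$ spaces with norms independent of the base point.

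Next I would invert the error using invertibility of $D^g$. Since $D^g\colon \dom(D^g)\subset L^2\to L^2$ is invertible with bounded inverse $(D^g)^{-1}$, define
\[
G(y,x) := P(y,x) - \bigl((D^g)^{-1} E(\cdot,x)\bigr)(y).
\]
Formally $D^g_y G(\cdot,x) = \delta_x + E(\cdot,x) - E(\cdot,x) = \delta_x$, which is the defining weak identity once tested against $\phi\in C_c^\infty(\Sigma_M)$; the pairing with $D^g\phi$ and an integration by parts (legitimate because $\phi$ is compactly supported and $P(\cdot,x)$ is locally integrable with integrable singularity) yields $\langle\psi_0,\phi(x)\rangle$. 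Elliptic regularity for $D^g$ gives smoothness of $G$ away from the diagonal, and local integrability of $G$ follows from that of $P$ together with the fact that $(D^g)^{-1}E(\cdot,x)$ is continuous (the source $E(\cdot,x)$ being bounded with controlled support, elliptic estimates upgrade its regularity). The symmetry $\Sigma_M\boxtimes\Sigma_M^*$ structure and the formal self-adjointness of $D^g$ can be used to check that the two slots behave correctly.

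For uniqueness, suppose $G_1,G_2$ are two such Green functions; then for fixed $x$ the difference $H(\cdot) = G_1(\cdot,x)-G_2(\cdot,x)$ satisfies $\int_M \langle H, D^g\phi\rangle = 0$ for all $\phi\in C_c^\infty$, i.e. $D^g H = 0$ weakly, and $H$ is locally integrable; elliptic regularity promotes $H$ to a smooth solution of $D^g H = 0$ on all of $M$ (the apparent singularity at $x$ is removable because the difference of two kernels with the same principal part is less singular than any negative power, hence locally bounded, hence a genuine distributional solution across $x$). One then needs $H\in L^2$ to conclude $H=0$ from invertibility of $D^g$; this is where the decay/integrability in the $y$-variable coming from $(D^g)^{-1}$ applied to an $L^2$-type source is used, so that the difference of the two constructions lies in $L^2$ and the kernel of $D^g$ on $L^2$ is trivial.

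The main obstacle I anticipate is making the uniform-in-$x$ bookkeeping genuinely uniform on a noncompact manifold: controlling the parametrix error $E$ and the mapping properties of $(D^g)^{-1}$ in function spaces that see both the local singularity near the diagonal and the global behavior at infinity, so that the weak identity holds globally and the uniqueness argument has an honest $L^2$ (or $L^2_{\mathrm{loc}}$ with the right global tail) statement to lean on. Everything else is a routine, if careful, application of elliptic theory for the Dirac operator.
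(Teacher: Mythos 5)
The paper does not actually carry out this proof: it cites \cite[Prop.~3.6]{ammann.grosse:p13a} and only remarks that bounded geometry is used there solely through the uniform volume bounds and the positive injectivity radius, which are taken directly as hypotheses here. Your parametrix--plus--correction scheme is the standard route and is almost certainly what that reference does, and you correctly identify that the uniform volume and injectivity radius assumptions are precisely what make the local trivialization, the cutoff, and hence the error kernel uniformly controllable in the base point~$x$.

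Two places deserve sharpening. First, a single parametrix step does \emph{not} give a bounded error: with the Euclidean Dirac kernel $|y-x|^{1-m}$ fed through connection coefficients of size $O(|y-x|)$ and metric corrections of size $O(|y-x|^2)$, the leading error is $O(|y-x|^{2-m})$, which is locally integrable but unbounded for $m\ge 3$ and not in $L^2_{\mathrm{loc}}$ for $m\ge 4$; one must either iterate the parametrix a uniformly bounded number of times (controlled again by the injectivity--radius and volume hypotheses) or work with $L^p$ mapping properties of $(D^g)^{-1}$ for $p$ close to $1$ rather than the bare $L^2$ inverse. Your own phrase \emph{lie in suitable $L^p$ spaces} already gestures at the fix, but the earlier \emph{bounded (indeed $L^\infty$)} claim as written is too strong. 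Second, uniqueness as you outline it requires that both candidate kernels belong to a class with a global tail (e.g.\ $L^2$ outside any ball around $x$, exactly the normalization the paper makes explicit in its Theorem for the conformal Laplacian); the Dirac statement here only says \emph{locally integrable}, and without some such global side condition the difference $H$ of two kernels, while smooth and $D^g$-harmonic, need not be $L^2$, so invertibility of $D^g$ cannot be invoked. You flag this in your final paragraph, so the gap is acknowledged, but in a finished write-up it should be promoted from an anticipated obstacle to an explicit hypothesis or a proved decay estimate on the construction.
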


\begin{proof} For a reference see \cite[Prop.~3.6]{ammann.grosse:p13a}. In the proof therein of the existence of a Green function  we use the assumption of bounded geometry only for uniform volume bounds and the positive injectivity radius which we directly put in here as an assumption.
\end{proof}

\begin{theorem}\label{exist_Green_L} Let $(M^m, g)$ be a complete Riemannian manifold with  positive injectivity radius. Assume that for an $r>0$ that is smaller than the injectivity radius there are constants $C_1,C_2>0$ with $C_1\leq \vol(B_r(x), g)\leq C_2$ for all $x\in M$.  Let the conformal Laplacian $L_g$ be invertible. Then $L_g$ admits a unique Green function $\Gamma\colon M\times M\setminus \Delta \to \R$ such that $\Gamma(x,.)\in L^2(M\setminus B_r(x))\cap C^\infty(M\setminus  B_r(x))$ for all $x\in M$ and all $r>0$, i.e., for all $v\in C_c^\infty(M)$
  \[ \int_M \Gamma(x,y)L_gv(y)\, \d y= v(x).\]
Moreover, $\Gamma$ is everywhere positive.

If we assume additionally that on an open subset $U$ the metric $g$ is conformally flat and $p_0\in U$, then the Green function has the following expansion in conformal normal coordinates as $x\to p_0$
\begin{equation}\label{green_exp} \Gamma(x, p_0)=\frac{1}{(m-2)\omega_{m-1} r^{m-2}}+ m_{p_0} +o(1),\end{equation}
where $r={\rm dist}_g(x, p_0)$, $\omega_{m-1}$ is the volume of $\mS^{m-1}$, and $m_{p_0}\in \mR$.
\end{theorem}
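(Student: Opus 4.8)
The plan is to treat the three assertions of the statement separately, following closely the construction of the Dirac Green function recalled just above.

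For existence and uniqueness I would argue as in \cite[Prop.~3.6]{ammann.grosse:p13a}. Invertibility of $L_g$ gives a bounded inverse $L_g^{-1}\colon L^2(M)\to H^2(M)$. Fixing $x$, I would build a compactly supported local parametrix $P_x=\eta E_x$ from the Euclidean fundamental solution $E_x$ of the principal part of $L_g$ in geodesic normal coordinates around $x$, with $\eta$ a cutoff equal to $1$ near $x$, so that $L_gP_x=\delta_x+f_x$ with $f_x\in C_c^\infty(M)$, and then set $\Gamma(\cdot,x):=P_x-L_g^{-1}f_x$. The defining identity $\int_M\Gamma(x,y)L_gv(y)\,\d y=v(x)$ is then immediate, smoothness off the diagonal is interior elliptic regularity, local integrability of $\Gamma(x,\cdot)$ is inherited from $E_x\sim r^{2-m}$, and membership in $L^2(M\setminus B_r(x))$ holds since $L_g^{-1}f_x\in H^2(M)\subset L^2(M)$ while $P_x$ is supported in an arbitrarily small ball about $x$. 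Uniqueness follows because the difference of two such Green functions lies in $L^2(M)$, solves $L_gu=0$, hence lies in $H^2(M)$ by the global elliptic estimates available under these hypotheses, and therefore vanishes.

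For positivity I would exhaust $M$ by connected precompact domains $\Omega_i$ with smooth boundary, note that the Dirichlet Green functions $\Gamma_i$ of $L_g$ on $\Omega_i$ are positive by the maximum principle (for large $i$ the relevant sign of $L_g$ is inherited from $M$), show $\Gamma_i\nearrow\Gamma$ locally uniformly away from the diagonal by a comparison/Harnack argument so that $\Gamma\geq0$, and finally invoke the strong maximum principle on the connected manifold $M$ to upgrade this to $\Gamma>0$ everywhere. I expect this to be the step requiring the most care, since one must reconcile positivity of $\Gamma$ with the noncompactness of $M$ and the precise force of the invertibility hypothesis.

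For the last assertion I would use conformal covariance. Since $g$ is conformally flat on $U$, I would choose $\phi>0$ near $p_0$ with $\tilde g:=\phi^{4/(m-2)}g$ flat in a neighborhood of $p_0$, normalized so that flat normal coordinates for $\tilde g$ are the conformal normal coordinates at $p_0$. The Green functions then transform by $\Gamma(x,p_0)=\phi(x)\phi(p_0)\widetilde\Gamma(x,p_0)$, where $\widetilde\Gamma$ is the Green function of $L_{\tilde g}$, and near $p_0$ one has $\scal_{\tilde g}\equiv0$, so $L_{\tilde g}\widetilde\Gamma(\cdot,p_0)=0$ off $p_0$; writing $\widetilde\Gamma(\cdot,p_0)$ as the Euclidean fundamental solution plus a remainder, the remainder is harmonic near $p_0$, hence smooth, hence constant up to $O(\tilde r)$ with $\tilde r=\dist_{\tilde g}(\cdot,p_0)$. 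Substituting back, expanding $\phi(x)=\phi(p_0)+O(\tilde r)$ and relating $\tilde r$ to $r=\dist_g(x,p_0)$, the singular terms recombine with the normalization of the $\delta$-distribution to yield \eqref{green_exp}, with $m_{p_0}\in\mR$ the value of the regular part at $p_0$. The only delicate point is verifying that the subleading contributions are genuinely $o(1)$ -- which is precisely what the conformal normal coordinate normalization is designed to guarantee -- and that the displayed leading constant is the one forced by the conventions used here.
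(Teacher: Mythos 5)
The paper's proof of this theorem is deliberately terse: it points to \cite[Prop.~3.6]{ammann.grosse:p13a} for existence and uniqueness (``analogously as for the Dirac operator''), and to \cite[Prop.~2.1]{HH_14} for the expansion \eqref{green_exp}. Your proposal fills in what those references do, and the overall architecture -- parametrix plus $L_g^{-1}$ for existence, exhaustion by Dirichlet problems and the maximum principle for positivity, conformal covariance plus the flat fundamental solution for the expansion -- is the right one.

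There is, however, one concrete step that fails as written. You claim that $L_g P_x = \delta_x + f_x$ with $f_x \in C_c^\infty(M)$ for $P_x = \eta E_x$ and $E_x$ the Euclidean fundamental solution. This is not true on a general Riemannian manifold: in geodesic normal coordinates one has $g_{ij} = \delta_{ij} + O(r^2)$, and applying $\Delta_g$ to $r^{2-m}$ leaves a remainder of size $O(r^{2-m})$ (e.g.\ from the $O(r)$ deviation of the mean curvature $H(r)$ of geodesic spheres from $(m-1)/r$). So $f_x$ is merely locally integrable, not smooth, and not even in $L^2$ for $m\geq 4$; the formula $\Gamma(\cdot,x) := P_x - L_g^{-1}f_x$ is then not immediately meaningful. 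The standard repair is to iterate the parametrix (Levi's method / Hadamard expansion): add finitely many correction terms of decreasing singularity until the remainder is, say, $C^0_c$, and only then apply $L_g^{-1}$ and elliptic regularity. This is precisely what is done in \cite[Prop.~3.6]{ammann.grosse:p13a}, to which the paper defers, and it is what should appear in your write-up.

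A smaller remark on the positivity step: your exhaustion argument relies on the \emph{positivity} of $L_g$ (via domain monotonicity of the first Dirichlet eigenvalue), whereas the theorem as stated only hypothesizes \emph{invertibility}. Mere invertibility would not give a positive Green function. This is a looseness already present in the paper's statement (compare the hypothesis ``positive'' in Lemma~\ref{green_symm} with ``invertible'' here), and in every subsequent application the conformal Laplacian is indeed positive, so your reading is the intended one; but you should say explicitly that you are using positivity, not only invertibility. The conformal-covariance argument for \eqref{green_exp} is correct and matches the content of \cite[Prop.~2.1]{HH_14}.
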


\begin{proof} The proof for existence of the Green function is done analogously as for the Dirac operator, cf. \cite[Prop.~3.6]{ammann.grosse:p13a}.  For the expansion of the Green function in normal coordinates see \cite[Prop.~2.1]{HH_14}.
\end{proof}

\begin{definition}\label{mass_d}
 The constant $m_{p_0}$ in \eqref{green_exp} is called the mass at $p_0$.

\begin{center}
\framebox{\parbox{0.8\linewidth}{Fix $p_0\in M$ and $\psi_0\in \Si_M|_{p_0}$. For the rest of the article we set
\[u(.):=\Gamma(.,p_0)\ \text{and\ } \psi(.):=G(., p_0)\psi_0.\]}}
\end{center}

\end{definition}

\subsection{Symmetries}\label{subsec_symm}

In this section we introduce the manifolds $M_Z$ we merely want to consider in this paper. Moreover, we collect the implications of the symmetry assumptions on $M_Z$ on the Green functions.

\begin{definition}[Model spaces at infinities] Let $(N^n, g_N)$ be a closed Riemannian spin manifold of dimension $n\geq 0$, and let $f\colon [a,\infty)\to \R$ be a smooth positive function such that $f(r)\to \infty$ and  $ f^{-1}(r)f''(r)\to c^2$  as $r\to \infty$ for some $c\geq 0$. Set \[(Z^m, g_Z)= (N\times \mS^k \times (a,\infty), g_{N}+f(r)^2\sigma+dr^2)\]
with $m=n+k+1$.
\end{definition}

\begin{remark}
Analogously as in \cite[Sect. 4.2]{ammann.grosse:p13a} the action of $\SO(k+1)$ on $\mS^k$ induces an isometric action on $Z$ such that it lifts to a $\Spin(k+1)$-action on the spinor bundle.
\end{remark}

\begin{example}
 For $f(r)=\sinh(r)$ and $a=0$ we obtain $Z=\mH^{k+1}\times N$ with $c=1$. For $f(r)=r$ and $a=0$ we get $c=0$ and $Z=\R^{k+1}\times N$.
\end{example}

\begin{remark} The conditions $f(r)\to \infty$ and $ f^{-1}(r)f''(r)\to c^2$ imply  $f'(r)f^{-1}(r)\to c$ as $r\to \infty$. In order to see this, we rewrite this as a first order ODE system and use Theorem~\ref{pert_ode}. Thus, by $\scal_g=\scal_{N} +f(r)^{-2} \scal_{{\mS^k}} -n(n-1) f(r)^{-2} (\partial_r f(r))^2 -2nf(r)^{-1}\partial_r^2 f$, cf.  \cite[Thm. 2.1]{Dobarro_LamiDozo_87},  $\scal_g\to s:=\scal_{N}-k(k+1)c^2$ as $r\to \infty$. Moreover, the unnormalized mean curvatures of $N\times \mS^k$ and $\mS^k$ in $Z$ are both given by  $H(r)=f'(r)f^{-1}(r)$. 
\end{remark}

\begin{notation}\label{def_M} From now on assume that $(M_Z, g)$ is an $m$-dimensional connected Riemannian spin manifold with an isometric $\SO(k+1)$-action with a lift to a $\Spin(k+1)$-action on the spinor bundle. Moreover, $M_Z$  shall be $\Spin(k+1)$-equivariantly spin isometric to $(Z, g_Z)$  outside a compact subset.
\end{notation}

\begin{remark}
Note that by $f^{-1}f''\to c^2$ and $f\to \infty$, the manifold has positive injectivity radius and its curvature tensor is uniformly bounded. Thus, by comparison theorems we have the uniform volume bounds required in  the results on the Green function in Subsection~\ref{subsec_symm}.
\end{remark}

\begin{lemma}\label{green_symm} Let the conformal Laplacian on $M_Z$ be positive, and let the Dirac operator on $M_Z$ be invertible.
Then, the Green function $\Gamma$ of the conformal Laplacian on $M_Z$ is $\SO(k+1)$-equivariant and, thus,  $u|_Z$, cf. Definition~\ref{mass_d}, is only a function in $r$. Moreover, the Green function of the Dirac operator on $M_Z^m$ is $\Spin(k+1)$-equivariant. Then,  we have $(D^{\mS^k})^2\psi|_Z=\frac{k^2}{4}\psi|_Z$ for $\psi$ as in Definition~\ref{mass_d}  and where the $D^{\mS^k}$ is the Dirac operator along the $\mS^k$ component in $Z$. 
\end{lemma}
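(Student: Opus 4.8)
The plan is to obtain the two equivariance statements from the uniqueness of the Green functions (Theorem~\ref{exist_Green_L} for $L_g$ and the corresponding theorem for $D^g$ recalled in Subsection~\ref{subsec_Green}), using that $L_g$ and $D^g$ commute with isometries resp.\ spin isometries. Write $\Phi_a$ for the action of $a\in\SO(k+1)$ on $M_Z$ and $\widetilde\Phi_a$ for its lift to $\Si_{M_Z}$ furnished by the assumed $\Spin(k+1)$-action. Since $\Phi_a^*g=g$, one has $\Phi_a^*\circ L_g=L_g\circ\Phi_a^*$; substituting $y\mapsto\Phi_a^{-1}y$ in $\int_{M_Z}\Gamma(x,y)L_gv(y)\,\d y=v(x)$ and using that $\Phi_a$ preserves $\d y$, one checks that $(x,y)\mapsto\Gamma(\Phi_ax,\Phi_ay)$ again solves this identity for all $v\in C_c^\infty(M_Z)$ and has the regularity and positivity characterising the Green function, so by uniqueness $\Gamma(\Phi_ax,\Phi_ay)=\Gamma(x,y)$. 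The identical argument with $D^g$, $\widetilde\Phi_a$ and $\widetilde\Phi_a^*\circ D^g=D^g\circ\widetilde\Phi_a^*$ gives $G(\widetilde\Phi_ax,\widetilde\Phi_ay)=\widetilde\Phi_a\circ G(x,y)\circ\widetilde\Phi_a^{-1}$. Taking $p_0$ to be a fixed point of the $\SO(k+1)$-action (on a model $\mH_c^{k+1}$-end this is the centre $r=0$), it follows that $u=\Gamma(\cdot,p_0)$ is $\SO(k+1)$-invariant, hence constant along the orbits $\{q\}\times\mS^k\times\{r\}$ of $Z$, i.e.\ a function of $r$; and $\psi=G(\cdot,p_0)\psi_0$ is $\Spin(k+1)$-equivariant up to the isotropy action of $\Spin(k+1)$ on $\psi_0\in\Si_{M_Z}|_{p_0}$.

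For the identity $(D^{\mS^k})^2\psi|_Z=\tfrac{k^2}{4}\psi|_Z$, the plan is to restrict $\psi|_Z$ to one orbit $\{q\}\times\mS^k\times\{r\}\subset Z$. Because the metric on $Z$ is the warped product $g_N+f(r)^2\sigma^k+\d r^2$, along such an orbit $\Si_Z$ is, under the standard identifications, a copy of the spinor bundle $\Si_{\mS^k}$ of the round sphere $(\mS^k,\sigma^k)$ tensored with a fixed vector space $W$ assembled from $\Si_N|_q$ and the spinor directions normal to the orbit, and the fixed round Dirac operator $D^{\mS^k}$ acts on the $\Si_{\mS^k}$-factor while $\Spin(k+1)$ acts trivially on $W$. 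Next, at the fixed point the isotropy representation of $\Spin(k+1)$ on $\Si_{M_Z}|_{p_0}$ acts by the spinor representation on the $(k+1)$ normal directions and trivially on the $N$-directions, hence is a sum of copies of the spinor representation of $\Spin(k+1)$. Thus the restriction of $\psi$ to an orbit lies in the spinor-isotypic part of $\Gamma(\mS^k,\Si_{\mS^k})\otimes W$. Finally, by the classical computation of $\Spec(D^{\mS^k})$ and of the eigenspaces as $\Spin(k+1)$-modules (B\"ar; compare \cite[Sect.~4.2]{ammann.grosse:p13a}), the spinor-isotypic component of $L^2(\Si_{\mS^k})$ is precisely the eigenspace of $D^{\mS^k}$ for the eigenvalues $\pm\tfrac{k}{2}$, all larger eigenvalues $\pm(\tfrac{k}{2}+l)$, $l\geq1$, carrying pairwise inequivalent modules none of which is the spinor representation. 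Hence $\psi|_Z$ is fibrewise a $\pm\tfrac{k}{2}$-eigensection of $D^{\mS^k}$, which is the claim.

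The step I expect to be the main obstacle is this last one: one must fix the identifications — the warped-product splitting of $\Si_Z$, the induced spin structure on the $\mS^k$-orbits, the precise lift $\widetilde\Phi_a$, and the isotropy representation at $p_0$ — carefully enough that the representation-theoretic input (that only the lowest Dirac eigenvalue of $\mS^k$ realises the spinor representation of $\Spin(k+1)$) applies verbatim. By contrast the equivariance of $\Gamma$ and $G$ is a formal consequence of uniqueness, and the reduction of $u|_Z$ to a function of $r$ uses only the orbit structure of the $\SO(k+1)$-action on $Z$.
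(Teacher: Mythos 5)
Your proof is correct and follows essentially the same route as the paper, whose own proof is terse: the paper derives equivariance of $\Gamma$ and $G$ from uniqueness in one sentence, and refers the second identity about $(D^{\mS^k})^2\psi$ to Lemma~5.3 of \cite{ammann.grosse:p13a}, which is precisely the representation-theoretic argument you spell out (orbitwise splitting of $\Si_Z$, isotropy representation at $p_0$ a sum of spinor modules of $\Spin(k+1)$, and B\"ar's identification of the $\pm k/2$-eigenspace of $D^{\mS^k}$ as the unique spinor-isotypic summand). One point you handle more carefully than the paper's text: you explicitly require $p_0$ to be a fixed point of the $\SO(k+1)$-action. This is in fact indispensable — without $\Phi_a p_0=p_0$, the two-variable equivariance $\Gamma(\Phi_a x,\Phi_a y)=\Gamma(x,y)$ does not yield invariance of $u=\Gamma(\cdot,p_0)$, nor would $\psi$ restrict to a spinor-isotypic section — yet the paper's Definition~\ref{mass_d} and the lemma statement leave it tacit. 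Two cosmetic remarks: write $G(\Phi_a x,\Phi_a y)=\widetilde\Phi_a\circ G(x,y)\circ\widetilde\Phi_a^{-1}$ with $\Phi_a$ (not $\widetilde\Phi_a$) in the base arguments, since $G$ lives over $M\times M$; and "a function of $r$" should be read as "independent of the $\mS^k$-coordinate" (the $N$-dependence is retained, as the decomposition over $\mu_i$ in Lemma~\ref{GreenLapl} shows), matching the paper's own loose phrasing.
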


\begin{proof}
Since the Green function and the conformal Laplacian are both $\SO(k+1)$-invariant, it follows by the uniqueness of the Green function that $u|_Z$ is only a function of $r$. 

The claim for $\psi$ is proven completely analogously to \cite[Lem.~5.3]{ammann.grosse:p13a}.
\end{proof}

\subsection{Perturbation of linear systems of ODE}

In order to estimate the Green functions in Section~\ref{Green_est_sec} we need the asymptotic behaviour for certain perturbed differential equations.
We cite here a result that can be found in more general versions, e.g. in \cite[I.2 Thm. (*)]{hartman_winter}, \cite[Satz~13]{perron}.

\begin{theorem}\label{pert_ode}  
Let $x(t)$ denote a function in $t\in [t_0,\infty)$ with values in $\C^d$. Let $J\in M_d(\mC)$, let $G\colon [t_0,\infty)\to M_d(\mC)$ be continuous where $M_d(\mC)$ denotes the sets of $d\times d$ complex matrices. Moreover let $G(t)\to 0$ as $t\to \infty$. 
Let $r_1, \ldots,  r_d$ be the real parts of the eigenvalues of $J$ -- listed with multiplicities.  Then the differential equation
\begin{align*}
 x'=(J+G(t))x
\end{align*}
has linearly independent solutions $x_1(t),\ldots , x_d(t)$ with the following property: For all $\epsilon>0$ there is $t_1$ such that for all $t\geq t_1$
\begin{align*} |x_i(t_1)|e^{ (r_i-\epsilon) (t-t_1)} \leq |x_i(t)|\leq |x_i(t_1)|e^{ (r_i+\epsilon) (t-t_1)}.
\end{align*}
\end{theorem}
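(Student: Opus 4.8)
The statement to prove is Theorem~\ref{pert_ode}, a classical Levinson/Hartman--Wintner type result on the asymptotic behaviour of solutions of a linearly perturbed constant-coefficient system $x' = (J + G(t))x$ with $G(t) \to 0$. My plan is to reduce to a normal form and then run a fixed-point/contraction argument block by block according to the real parts of the eigenvalues of $J$. First I would put $J$ into a convenient form: after a constant invertible change of variables $x = Sy$ we may assume $J$ is in Jordan form, and moreover, by the standard trick of conjugating each Jordan block with $\diag(1,\delta,\delta^2,\ldots)$ for small $\delta$, we may assume the off-diagonal entries of the nilpotent parts are as small as we like. Absorbing that small nilpotent perturbation into $G(t)$ (it only changes $G$ by a constant of size $\delta$, which we can take smaller than any prescribed $\epsilon$ after a further harmless modification, or alternatively keep it and note it does not affect real parts), we reduce to the case where $J$ is block-diagonal with blocks $J_\ell$ whose spectra have distinct real parts, and within tolerance $\epsilon/2$ each block behaves like a scalar $r_i$ on the diagonal.

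Next I would order the distinct real parts and, for each eigenvalue (with its multiplicity), construct a solution with the claimed two-sided exponential bound. The key step is the integral-equation method: for a target growth rate $r_i$, one writes a solution in the form $x_i(t) = e^{J t} e_i + \int$ of a variation-of-constants term, splitting the Green-type kernel $e^{J(t-s)}$ into the part spanned by eigenvalues with real part $\le r_i$ (integrated forward from $t_1$) and the part with real part $> r_i$ (integrated backward from $\infty$). Because $\|G(s)\| \to 0$, choosing $t_1$ large makes $\sup_{s \ge t_1}\|G(s)\|$ small, so the resulting integral operator is a contraction on the Banach space of continuous functions $x$ on $[t_1,\infty)$ with finite norm $\sup_{t\ge t_1} |x(t)| e^{-(r_i+\epsilon)(t-t_1)}$; its fixed point is the desired solution, and one reads off the upper bound $|x_i(t)| \le |x_i(t_1)| e^{(r_i+\epsilon)(t-t_1)}$ directly. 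The lower bound $|x_i(t)| \ge |x_i(t_1)| e^{(r_i-\epsilon)(t-t_1)}$ follows from the same integral equation by estimating $|x_i(t)| \ge |e^{Jt}e_i|$-type leading term minus the (smaller-order) correction, or equivalently by applying the upper-bound argument to the reversed-time system. Linear independence of $x_1,\ldots,x_d$ is clear since their leading terms $e^{Jt}e_i$ are linearly independent and the corrections are lower order as $t\to\infty$; equivalently the Wronskian is nonzero for $t$ large by Liouville's formula ($\det$ evolves by $\exp\int \operatorname{tr}(J+G)$, which is nonzero), hence for all $t \ge t_1$.

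The main obstacle is the bookkeeping in the block-triangular estimate: when several eigenvalues share the same real part, or when a Jordan block is nontrivial, the naive variation-of-constants kernel carries polynomial factors $|t-s|^{j}$ from the nilpotent part of $e^{J(t-s)}$, and one has to check that the contraction still closes with the chosen weight $e^{(r_i\pm\epsilon)(t-t_1)}$ — the point being that polynomial factors are absorbed by the arbitrarily small $\epsilon$ in the exponent, but this requires the preliminary $\delta$-conjugation so that the polynomial growth is actually controlled. A secondary subtlety is making the splitting of the kernel at the threshold $r_i$ precise when $r_i$ is itself one of the real parts; there one must be slightly careful about which block is integrated forward and which backward so that the improper backward integral converges. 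Since the paper only cites this theorem (with references to Hartman--Wintner and Perron), a proof at the level of the above sketch, or simply a reference, suffices; I would present the contraction argument in the reduced normal form and relegate the Jordan-block verification to a short remark.
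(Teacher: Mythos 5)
The paper states Theorem~\ref{pert_ode} without proof, citing Hartman--Wintner and Perron, so there is no in-paper argument to compare against; I evaluate the sketch on its own merits. Your Jordan reduction with the $\diag(1,\delta,\dots)$ conjugation, the variation-of-constants fixed point in the $e^{(r_i+\epsilon)(t-t_1)}$-weighted space with the kernel split at the threshold $r_i$, and the Wronskian/Liouville argument for linear independence are all standard and correct for the \emph{upper} bound.

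The gap is in the lower bound, and it is a genuine one. Your first route --- write $x_i(t)=e^{J(t-t_1)}e_i+(\text{correction})$ and bound $|x_i|\ge|e^{J(t-t_1)}e_i|-|\text{correction}|$ --- is the Levinson-type argument, and it only works under the stronger hypothesis $\int_{t_0}^\infty\|G\|\,dt<\infty$, where one really does get $x_i(t)=e^{\lambda_i t}(e_i+o(1))$. Here the hypothesis is only $G(t)\to 0$; the fixed point gives $\|x_i-e^{J(\cdot-t_1)}e_i\|_w\le\theta\|x_i\|_w$ in the weighted norm, i.e.\ a pointwise bound on the correction of size $Ce^{(r_i+\epsilon)(t-t_1)}$, whereas $|e^{J(t-t_1)}e_i|\sim e^{r_i(t-t_1)}$ up to polynomials. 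For every $\epsilon>0$ the correction therefore eventually dominates the leading term, so it is not "smaller-order", and the argument collapses for large $t$. Indeed, with $G\to0$ but not integrable the solution is in general \emph{not} asymptotic to a fixed eigendirection; only the exponential type is controlled. Your second route, "apply the upper bound to the reversed-time system", does not literally make sense on the half-line $[t_0,\infty)$. The standard fix, which is essentially what Perron and Hartman--Wintner do, is to apply the upper-bound argument to the \emph{adjoint} system $z'=-(J+G(t))^*z$ (a perturbation of $-J^*$, whose eigenvalue real parts are $-r_j$), use that $\langle x(t),z(t)\rangle$ is constant in $t$, and then run a dimension count on the resulting filtrations $W_\ell=\{x:\text{rate}\le\rho_\ell+\epsilon\}$ and $W'_{\ell'}$ for the adjoint to show the pairing between $W_\ell\setminus W_{\ell-1}$ and $W'_{p-\ell+1}$ is nondegenerate; that nondegeneracy, not a leading-term estimate, is what forces $|x_i(t)|\ge c\,e^{(r_i-\epsilon)(t-t_1)}$. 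This duality step is a genuinely separate idea that the sketch does not supply, so as written the proof proposal is incomplete.
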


\section{Positive mass theorem and decay of the Green functions}

In \cite{witten} E.~Witten  gave a proof of the positive mass conjecture for spin manifolds. A simplified version of this proof was given by E.~Humbert and the first author in \cite{ammann.humbert:05}. In this section we examine conditions such that the result remains true for general noncompact spin manifolds. For that we need to assume decay conditions for the involved Green functions in order to show convergence of some integrals at infinity.

\begin{theorem}\label{pos_gen} Let $(M, g)$, $p_0\in M$, $u$ and $\psi$ as in Definition~\ref{mass_d}. Assume that if ${\rm dim}\geq 6$ the metric is conformally flat in a neighbourhood of $p_0$. Let $K$ be a compact subset of~$M$, and $R\colon M\setminus K \to (0,\infty)$ be a smooth proper function such that for all $r>0$ the set $S_r:=\{x\in M\setminus K\ |\ R(x)=r\}$ is a smooth hypersurface in $M$ and $B_r:=K\cup \bigcup_{r'\leq r} S_{r'}$ is a compact subset of $M$ with boundary $S_r$.  Let $\nu$ be the outer normal of $S_r=\partial B_r$. Assume that there are constants $C_i>0$ ($i=1,2,3$), $r_0>0$, $\alpha_1\geq\alpha_2>0$, $\beta>0$ such that for all $x\in S_r $ with  $r>r_0$ we have
\[|u(x)|\geq C_1 e^{-\alpha_1 r}\quad |\partial_\nu u(x)|\leq C_2 e^{-\alpha_2 r}\]
and \[|\psi(x)|\leq C_3 e^{-\beta r}\quad |\nabla_\nu \psi(x)|\leq C_3 e^{-\beta r}\]
and \begin{equation}\label{vgl}\frac{m}{m-2}\alpha_1 -\alpha_2 < 2\beta.\end{equation}
Then, the mass at $p_0$ is nonnegative. If the mass is zero, then $M\setminus \{p_0\}$ carries a conformal metric that is flat and admits a basis of parallel spinors.
\end{theorem}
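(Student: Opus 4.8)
The plan is to follow Witten's argument adapted to the noncompact setting, where the key new input is controlling the boundary integral at infinity via the prescribed decay rates.

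\textbf{Setup and the Witten integrand.} First I would recall the Schoen--Yau/Witten-type conformal rescaling: set $\tilde g = u^{4/(m-2)} g$ on $M \setminus \{p_0\}$, so that near $p_0$ the metric $\tilde g$ is asymptotically flat (using the expansion \eqref{green_exp} of $u = \Gamma(\cdot,p_0)$ in conformal normal coordinates, whose leading term is the Euclidean Green function and whose constant term encodes the mass $m_{p_0}$), while near infinity of $M$ the behaviour is governed by the decay $|u| \geq C_1 e^{-\alpha_1 r}$. The metric $\tilde g$ is scalar-flat where $g$ was (away from $p_0$), and the conformal covariance of the Dirac operator turns $\psi$ into a $\tilde g$-harmonic spinor $\tilde\psi = u^{-(m-1)/(m-2)}\psi$ that is asymptotic to the constant spinor $\psi_0$ at $p_0$. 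Then I would write down the Lichnerowicz--Weitzenböck identity $\tilde D^2 = \tilde\nabla^*\tilde\nabla + \tfrac14 \widetilde{\scal}$, integrate the pointwise identity $\operatorname{div}\, V = |\tilde\nabla\tilde\psi|^2 + \tfrac14\widetilde{\scal}|\tilde\psi|^2 - |\tilde D\tilde\psi|^2$ (with $V$ the standard Witten vector field built from $\tilde\psi$) over the region between a small geodesic sphere $\partial B_\varepsilon(p_0)$ and the large hypersurface $S_r$, and apply the divergence theorem.

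\textbf{The two boundary terms.} The inner boundary term over $\partial B_\varepsilon(p_0)$ is the classical computation: as $\varepsilon \to 0$ it converges to a positive multiple of $m_{p_0}\,|\psi_0|^2$ (this is exactly the computation in \cite{ammann.humbert:05}, using the expansion \eqref{green_exp}), and I would simply cite that. The outer boundary term over $S_r$ is $\int_{S_r}\langle \text{(bilinear in }\tilde\psi, \tilde\nabla_\nu\tilde\psi)\rangle\, d\tilde\sigma$; here I must show it tends to $0$ as $r \to \infty$. Converting back to the original metric $g$, the integrand is bounded pointwise by a constant times $u^{-2m/(m-2)}\cdot u^{2(m-1)/(m-2)}\cdot(|\psi|\,|\nabla\psi| + |u|^{-1}|\partial_\nu u|\,|\psi|^2)$ times the $\tilde g$-volume element on $S_r$, and the conformal factor relating $d\tilde\sigma$ to $d\sigma_g$ contributes $u^{2(m-1)/(m-2)}$. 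Plugging in the hypotheses $|u|\geq C_1 e^{-\alpha_1 r}$, $|\partial_\nu u|\leq C_2 e^{-\alpha_2 r}$, $|\psi|,|\nabla_\nu\psi|\leq C_3 e^{-\beta r}$, the exponential rate of the integrand (before the surface-area factor) is controlled by $-2\beta + \tfrac{2}{m-2}\alpha_1 \cdot(\text{something}) + \alpha_1 - \alpha_2$; after bookkeeping the exponent works out to be bounded above by a negative constant precisely because of assumption \eqref{vgl}, $\tfrac{m}{m-2}\alpha_1 - \alpha_2 < 2\beta$. One also needs that $\operatorname{vol}(S_r, \tilde g)$ does not grow fast enough to overcome this — this follows since the metric has bounded geometry and, in the rescaled picture, the exponential decay of $u$ already dominates any polynomial or fixed-exponential volume growth; I would make this precise with the uniform volume bounds available from the remark preceding Lemma~\ref{green_symm}. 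Hence the outer term vanishes in the limit.

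\textbf{Conclusion and rigidity.} Combining, $c_m\, m_{p_0}\,|\psi_0|^2 = \int_{M\setminus\{p_0\}} \big(|\tilde\nabla\tilde\psi|^2 + \tfrac14\widetilde{\scal}|\tilde\psi|^2\big)\,d\tilde{\operatorname{vol}} \geq 0$ since $\widetilde{\scal}\geq 0$ (it is $\equiv 0$ on $M\setminus\{p_0\}$ in fact, as $\tilde g$ is scalar-flat there), giving $m_{p_0}\geq 0$. For the equality case: if $m_{p_0}=0$ then $\tilde\nabla\tilde\psi \equiv 0$, so $\tilde\psi$ is a nonzero parallel spinor on $(M\setminus\{p_0\}, \tilde g)$; a manifold carrying a parallel spinor is Ricci-flat, and combined with scalar-flatness and the conformal-normal-coordinate structure near $p_0$ one concludes $\tilde g$ is flat (the Weyl tensor vanishes as in the dimension $\geq 6$ hypothesis being trivially satisfied here, and Ricci-flat plus conformally flat gives flat), so $M\setminus\{p_0\}$ is conformally flat with a basis of parallel spinors. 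The main obstacle is the careful exponential bookkeeping in the outer boundary estimate — tracking all conformal weights correctly so that condition \eqref{vgl} is exactly what is needed, and simultaneously arguing that the induced volume growth of $S_r$ is subexponential enough not to spoil the estimate; everything else is a direct transcription of the compact Witten argument together with the citations to \cite{ammann.humbert:05} and \cite{HH_14}.
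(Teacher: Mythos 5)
Your overall strategy is the same as the paper's: set $\tilde g=u^{4/(m-2)}g$ and $\tilde\psi=u^{-(m-1)/(m-2)}\psi$, so that $\tilde g$ is scalar-flat on $M\setminus\{p_0\}$ and $\tilde D\tilde\psi=0$, hence $\tilde\nabla^*\tilde\nabla\tilde\psi=0$ by Schr\"odinger--Lichnerowicz; integrate by parts over the region between a small sphere around $p_0$ and $S_r$; identify the inner boundary limit with a positive multiple of $m_{p_0}$ by citing \cite{ammann.humbert:05}; and show the outer boundary term vanishes using the decay hypotheses. The paper's computation of the outer term is the same split you describe, namely
\[-\int_{S_r}u^{-\frac{2}{m-2}}\partial_\nu|\psi|^2\,\ds_g+\tfrac{2(m-1)}{m-2}\int_{S_r}u^{-\frac{m}{m-2}}|\psi|^2\,\partial_\nu u\,\ds_g,\]
with exponents $\tfrac{2}{m-2}\alpha_1-2\beta$ and $\tfrac{m}{m-2}\alpha_1-\alpha_2-2\beta$; your identification of \eqref{vgl} as the dominant constraint (the first exponent is controlled by the second since $\alpha_1\geq\alpha_2$) matches the paper.

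The rigidity argument is where your proposal goes wrong.  You deduce from a single parallel spinor that $\tilde g$ is Ricci-flat and then invoke ``Ricci-flat plus conformally flat gives flat.''  But global conformal flatness of $\tilde g$ is not available to you: the only Weyl-tensor hypothesis in the theorem is that $g$ is conformally flat in a \emph{neighbourhood of $p_0$}, and only when $\dim\geq 6$.  Nothing in your setup makes the Weyl tensor of $\tilde g$ vanish away from $p_0$, and for $\dim M\in\{3,4,5\}$ there is no conformal-flatness hypothesis at all.  You also assert at the end that $M\setminus\{p_0\}$ ``has a basis of parallel spinors,'' but you only produced one.  The paper's route is both shorter and correct: the choice of $\psi_0\in\Si_M|_{p_0}$ in Definition~\ref{mass_d} was arbitrary, so letting $\psi_0$ run over a basis of $\Si_M|_{p_0}$ yields, when $m_{p_0}=0$, a \emph{full basis} of $\tilde\nabla$-parallel spinors on $(M\setminus\{p_0\},\tilde g)$; this forces the spin curvature, and hence the Riemann curvature, to vanish, giving flatness without any reference to the Weyl tensor.

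A secondary remark: you are right to flag the growth of $\vol(S_r)$ as an issue that a pointwise bound alone does not settle, but your resolution (``the exponential decay of $u$ already dominates any polynomial or fixed-exponential volume growth'') is not a valid argument — whether one exponential dominates another is exactly a question about the constants, and in the intended application the surface areas grow like $e^{kcr}$ while each of $\alpha_1,\alpha_2,\beta$ is of the form $\tfrac{kc}{2}+\cdots$, so the comparison is genuinely sharp.  The paper's published proof also does not insert a $\vol(S_r,g)$ factor explicitly, so you are not departing from its level of detail, but the cavalier dismissal you offer would not survive scrutiny and you should not present it as a proof step.
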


\begin{proof} We use the notations given in Subsection~\ref{subsec_Green} and follow the idea of the proof of \cite[Thm. 2.2]{ammann.humbert:05}.
Note that in the published version of \cite{ammann.humbert:05} a small term was omitted, see the arXiv version.

 Set $\tilde{g}=u^{\frac{4}{m-2}}g$ and $\tilde{\psi}=u^{-\frac{m-1}{m-2}}\psi$. Then $(M\setminus \{p_0\}, \tilde{g})$ has zero scalar curvature and $\tilde{D}\tilde{\psi}=0$ on $M\setminus\{p_0\}$. Thus, by the Schr\"odinger-Lichnerowicz formula $\tilde{\nabla}^*\tilde{\nabla}\tilde{\psi}=0$. Moreover, $\partial_{\tilde{\nu}}=-u^{-\frac{2}{m-2}}\partial_\nu$ has length one w.r.t. $\tilde{g}$.

As in \cite[Proof of Thm. 2.2]{ammann.humbert:05}, using $\tilde{\nabla}^*\tilde{\nabla}\tilde{\psi}=0$ and partial integration we obtain
\[ 2\int_{M\setminus B_p(\epsilon)} |\tilde{\nabla}\tilde{\psi}|^2 \vo_{\tilde{g}}= \int_{S_{p_0}(\epsilon)} \partial_{\tilde{\nu}} |\tilde{\psi}|^2 \ds_{\tilde{g}}+ \lim_{r\to \infty}\int_{S_{p_0}(r)}\partial_{\tilde{\nu}}|\tilde{\psi}|^2\ds_{\tilde{g}}.\]

If  $\lim_{r\to \infty}\int_{S_{p_0}(r)}\partial_{\tilde{\nu}}|\tilde{\psi}|^2\ds_{\tilde{g}}=0$, then the nonnegative mass follows as in the compact case, see \cite[Proof of Thm. 2.2 and Sect. 3]{ammann.humbert:05}. Here we restrict to prove the vanishing of the limit from above: We have
\begin{align*}
 \int_{S_{p_0}(r)} \partial_{\tilde{\nu}}|\tilde{\psi}|^2\ds_{\tilde{g}}=&  -\int_{S_{p_0}(r)} u^2\partial_{{\nu}}|u^{-\frac{m-1}{m-2}}{\psi}|^2\ds_{g}\\
 =&-\int_{S_{p_0}(r)} u^{-\frac{2}{m-2}}\partial_{{\nu}}|\psi|^2\ds_{g}+2\frac{m-1}{m-2}\int_{S_{p_0}(r)} u^{-\frac{m}{m-2}}|{\psi}|^2\partial_{{\nu}}u \ds_{g}.
\end{align*}

Using the decay estimates we obtain

\begin{align*}
\left| \int_{S_{p_0}(r)} u^{-\frac{2}{m-2}}\partial_{{\nu}}|\psi|^2\ds_{g}\right|\leq C e^{(\frac{2}{m-2}\alpha_1 -2\beta)r}
\end{align*}

and
\begin{align*}
\left|\int_{S_{p_0}(r)} u^{-\frac{m}{m-2}}|{\psi}|^2\partial_{{\nu}}u \ds_{g}\right|\leq C e^{(\frac{m}{m-2}\alpha_1 -\alpha_2 -2\beta)r}.
\end{align*}

Since $\frac{m}{m-2}\alpha_1 -\alpha_2 <2\beta$ implies that $\frac{2}{m-2}\alpha_1 <2\beta$, both integrals go to zero as $r\to \infty$. Thus, the mass at $p_0$ is nonnegative.

Assume now that the mass is zero, then as in the compact case $\tilde{\nabla}\tilde{\psi}=0$, cp. \cite[Thm. 2.2]{ammann.humbert:05}. Moreover, since the choice of $\psi_0$ in the definition of $\psi$ is arbitrary, we again obtain a basis of parallel spinors on $(M\setminus \{p_0\}, \tilde{g})$. Thus, $(M\setminus \{p_0\}, \tilde{g})$ is flat, and $(M\setminus \{p_0\}, g)$ is conformally flat.
\end{proof}

\section{\texorpdfstring{Green function estimates for $M_Z$.}{Green function estimates for MZ.}}\label{Green_est_sec}

We consider a spin manifold $(M_Z^m,g)$ as introduced in Notation~\ref{def_M}. The aim of this section is to provide estimates for the corresponding Green functions as required to apply Theorem~\ref{pos_gen}. 

 \subsection{\texorpdfstring{Estimating the Green function of $L_g$}{Estimating the Green function of Lg}}

\begin{lemma}\label{GreenLapl} Let $(M_Z^m, g)$ be as in Notation~\ref{def_M}. Let the conformal Laplacian $L_g$ of $(M_Z,g)$ be invertible. We use the definitions and notations from Subsection~\ref{subsec_Green}.

Then for every $\epsilon >0$  there is an $r_0>0$  such that for all $x=(x_N, x_{\mS}, r)\in N\times \mS^k\times [r_0,\infty)$ we get
\[ u(x)\geq C_0 e^{-\alpha_+ r}  \text{\ and\ } |\partial_r u(x)|\leq C_1 e^{-\alpha_- r},\]
where \[\alpha_\pm= \frac{kc}{2}\pm\epsilon +\Re\sqrt{\frac{k^2c^2}{4}+\frac{\scal_\pm-c^2k(k+1)}{a_m}},\]
$\scal_+=\sup_N \scal_N$, $\scal_-=\inf_N \scal_N$,  and $C_0,C_1>0$. 
\end{lemma}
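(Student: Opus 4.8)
The plan is to analyze the ODE satisfied by $u|_Z$, which by Lemma~\ref{green_symm} is a function of $r$ alone on the end $Z = N \times \mathbb{S}^k \times [a,\infty)$. Away from $p_0$ the Green function satisfies $L_g u = 0$, i.e. $a_m \Delta_g u + \scal_g u = 0$. Since $u = u(r)$, the Laplacian reduces to $\Delta_g u = -u'' - k\,H(r)\,u'$ (sign conventions as in the paper), where $H(r) = f'(r)/f(r)$ is the mean curvature term appearing in the preliminaries; and $\scal_g = \scal_N + f^{-2}\scal_{\mathbb{S}^k} - k(k-1)f^{-2}(f')^2 - 2k f^{-1}f''$, which by the hypotheses $f^{-1}f'' \to c^2$, $f'f^{-1}\to c$, $f\to\infty$ converges to $s_\pm$-type limits; more precisely $\scal_g(r) \to \scal_N - k(k+1)c^2$ pointwise in the $N$-variable, bounded between the values obtained from $\scal_\pm$. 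So $u$ satisfies
\[
a_m\bigl(u'' + kH(r)u'\bigr) - \scal_g(r) u = 0,
\]
an asymptotically autonomous second-order linear ODE (the dependence on $x_N$ enters only through the bounded coefficient $\scal_g$, and since $u$ is a genuine function of $r$ one works with the extreme cases $\scal_\pm$ to get the two-sided bounds).

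\textbf{Key steps.} First I would rewrite this as a first-order system $X' = (J_\pm + G(r))X$ with $X = (u, u')^\top$, where $J_\pm$ has characteristic polynomial $a_m\mu^2 + a_m kc\,\mu - (\scal_\pm - k(k+1)c^2) = 0$ (using $H(r)\to c$), and $G(r)\to 0$ as $r\to\infty$ by the perturbation hypotheses on $f$. The eigenvalues of $J_\pm$ are
\[
\mu_\pm = -\frac{kc}{2} \pm \Re\sqrt{\frac{k^2c^2}{4} + \frac{\scal_\pm - c^2 k(k+1)}{a_m}},
\]
so the real parts relevant to decay are exactly $-\alpha_\mp$ with $\alpha_\pm$ as in the statement. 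Then I would invoke Theorem~\ref{pert_ode}: for every $\epsilon>0$ there is $r_1$ and a basis of solutions each controlled by $e^{(r_i\pm\epsilon)(r-r_1)}$. Since $u>0$ everywhere (Theorem~\ref{exist_Green_L}) and $u\in L^2$ near infinity, $u$ cannot be the solution growing like the larger eigenvalue — it must decay at least as fast as the slower-decaying admissible rate while remaining positive, which pins $|u(x)| \geq C_0 e^{-\alpha_+ r}$ from below (using $\scal_+$, the sup, giving the faster possible decay bound as an upper bound on the decay rate — hence a lower bound on $u$) and, for the derivative, $|u'(x)| = |\partial_r u| \leq C_1 e^{-\alpha_- r}$ from above (using $\scal_-$). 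The positivity of $u$ is what rules out the oscillatory/sign-changing behaviour and lets the Perron–Hartman-type dichotomy be read off cleanly; one also has to check that $L_g$ being positive forces the discriminant situation so that $\alpha_\pm > 0$, i.e. genuine exponential decay, which is where the invertibility/positivity hypothesis on $L_g$ is used.

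\textbf{Main obstacle.} The delicate point is handling the fact that $\scal_g$ depends on $x_N$ while $u$ does not: one must argue that the single radial function $u$ is squeezed between the solutions of the two constant-coefficient comparison ODEs with $\scal_+$ and $\scal_-$. Concretely I would use a sub/supersolution comparison: the function $u$ solves the true equation, and comparing against radial solutions of the $\scal_\pm$-equations via the maximum principle (in the $r$-variable, using positivity of $u$ and control of $u'$ at one radius $r_0$) yields the two-sided exponential bounds. A secondary technical nuisance is that $\sqrt{\cdot}$ may be complex (if the expression under the root is negative), whence the $\Re$ in the statement; in that regime the comparison solutions oscillate with envelope $e^{-kcr/2}$, and one must check that positivity of $u$ is still compatible — but since $u>0$ globally this case is actually constrained, and the bound with $\Re\sqrt{\cdot}=0$ still holds. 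Getting the $\epsilon$ bookkeeping consistent with Theorem~\ref{pert_ode} (the $\pm\epsilon$ in $\alpha_\pm$) is then routine. The upshot is the claimed estimates with $\alpha_\pm = \tfrac{kc}{2}\pm\epsilon + \Re\sqrt{\tfrac{k^2c^2}{4} + \tfrac{\scal_\pm - c^2k(k+1)}{a_m}}$.
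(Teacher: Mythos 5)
Your proposal rests on a misreading of Lemma~\ref{green_symm}.  The $\SO(k+1)$-equivariance only says that $u|_Z$ is constant in the $\mathbb{S}^k$-direction; it does \emph{not} say that $u$ is independent of $x_N$.  There is no symmetry assumption on $N$, so in general $u=u(x_N,r)$.  (Indeed, if $u$ really were a function of $r$ alone while $\scal_N$ is non-constant, the equation $a_m(u''+kH u')-\scal_g\,u=0$ would be inconsistent, since its left side would vary in $x_N$ while $u$ does not.)  This premise underlies your entire plan: the "single radial ODE" and the sub/supersolution comparison against the two constant-coefficient equations with $\scal_\pm$ does not apply, because there is no single radial function to compare.

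The paper instead expands $u=\sum_{i,l}u_{i,l}$ along joint eigenfunctions of $\Delta^N$ and $\Delta^{\mathbb{S}^k}$; Lemma~\ref{green_symm} kills the $\mathbb{S}^k$-modes ($l=0$), but one retains an infinite family of $\Delta^N$-modes indexed by $\mu_i$, each satisfying a perturbed linear ODE to which Theorem~\ref{pert_ode} applies.  Positivity of $u$ is used exactly where you expect (to show the $\mu_0=0$ mode does not vanish, giving the lower bound with $\alpha_+$), but the upper bound on $|\partial_r u|$ is genuinely an estimate on the infinite sum of all modes: the paper uses $\partial_ru\in L^2$, Cauchy--Schwarz, and Weyl's asymptotic law on $N$ to prove the series converges with the claimed exponential envelope.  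Your outline contains no analogue of this step; a pointwise maximum-principle comparison bounds $u$, not $\partial_r u$, and even for $u$ itself the comparison is against a family of ODEs rather than a single one.  So while the final formulas for $\alpha_\pm$ you write are correct, the mechanism you propose to reach them does not work, and the essential summability argument is missing.
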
 
 
  \begin{proof}  On $M\setminus \{p_0\}$ we have $L_gu=0$ which reads 
 \begin{equation}
 \label{u_gen} 
 \frac{\partial^2}{\partial^2 r} u -\Delta^{N} u -\frac{1}{f(r)^2} \Delta^{{\mS^k}} u + k\frac{f'(r)}{f(r)} \frac{\partial}{\partial r} u -\frac{\scal_g}{a_m} u=0
 \end{equation}
 
  where $\scal_g=\scal_{N} +f(r)^{-2} \scal_{{\mS^k}} -n(n-1) f(r)^{-2} (\partial_r f(r))^2 -2nf(r)^{-1}\partial_r^2 f$, cf.  \cite[Thm. 2.1]{Dobarro_LamiDozo_87}. Note that by the assumptions on $f$ and its derivatives we have $\scal_g\to \scal_{N}-k(k+1)c^2:=s$ as $r\to \infty$.
  
   We decompose the space of smooth functions on $N\times \mS^k$ into minimal subspaces which are generated by common eigenfunctions of the commuting operators $\Delta^N$ and $\Delta^{\mS^k}$. If we extend those eigenfunctions constant in $r$-direction, we obtain a decomposition of the space of smooth functions on $N \times \mS^k\times [a, \infty)$.
   In that sense we decompose $u=\sum_{i,l} u_{i,l}$ where $\Delta^{N}u_{i,l}=\mu_iu_{i,l}$ and $\Delta^{{\mS^k}}u_{i,l}=\lambda_lu_{i,l}$ where $0=\mu_0< \mu_1\leq \ldots$ and $0=\lambda_0< \lambda_1\leq \ldots$. 
 Then, \eqref{u_gen} decomposes into the equations
\begin{equation}\label{equ_uil} \frac{\partial^2}{\partial^2 r} u_{i,l} + k\frac{f'(r)}{f(r)}\frac{\partial}{\partial r} u_{i,l}-\left(\mu_i + \lambda_l f(r)^{-2}+\frac{\scal_g}{a_m}\right) u_{i,l}= 0.
  \end{equation}
  
which can be written as 

\[\frac{\partial}{\partial_r}
 \begin{pmatrix}
  u_{i,l}\\ \frac{\partial}{\partial_r}u_{i,l}
 \end{pmatrix}
 = \Bigg( \underbrace{\begin{pmatrix}
           0&1\\
           \mu_i+\frac{s}{a_m}& -kc
          \end{pmatrix}}_{=:J}
+
 \underbrace{\begin{pmatrix}
           0&0\\
           \frac{s-\scal_g}{a_m}+f(r)^{-2} \lambda_l & k\left(c-\frac{f'(r)}{f(r)}\right)
          \end{pmatrix}}_{=:R(r)}
 \Bigg)  \begin{pmatrix}
  u_{i,l}\\ \frac{\partial}{\partial_r}u_{i,l}
 \end{pmatrix}
\]

Since $u\in L^2$, each $u_{i,l}\in L^2$ as well. Multiplication of \eqref{equ_uil} with $u_{i,l}$ and partial integration shows that $\partial_r u_{i,l}\in L^2$. 

Moreover, note that $J$ is constant and that $R(r)\to 0$ as $r\to \infty$. The matrix $J$ has the eigenvalues $\alpha_{i,\pm}= -\frac{kc}{2} \pm \sqrt{\frac{k^2c^2}{4}+ \mu_i+\frac{s}{a_m}}$. Fix $\epsilon>0$. Thus, by Theorem~\ref{pert_ode} and since $u_{i,l}$ and $\partial_r u_{i,l}$ are $L^2$ we obtain an $r_0>0$ such that for all $r\geq r_0$

\[ |u_{i,l}(r_0)| e^{(\Re\  \alpha_{i,-}- \epsilon)(r-r_0)}\leq  |u_{i,l}(r)|\leq |u_{i,l}(r_0)| e^{(\Re\  \alpha_{i,-}+\epsilon)(r-r_0)}
 \]
 and the analogous estimate for $\partial_r u_{i,l}$.

 By Lemma~\ref{green_symm} only $l=0$  appears. Thus, only the different eigenvalues in $N$-direction need to be taken into account. Set $u_{i}:=u_{i,0}$.  
  Note that $\int_N u_{i}=0$ for all $i>0$. But since $u$ is everywhere positive this implies that $u_0$ cannot be identically zero. Setting  $\alpha_+:=\sup_N (-\Re\,  \alpha_{0,-})$, this concludes the proof of the first estimate.

For the second estimate we first use that $\partial_ru\in L^2$ and set $\alpha_{i}^-=\sup_N \Re\, \alpha_{i,-}$ to obtain 
\begin{align*}
 \infty> \int_{Z} |\partial_r u|^2\geq &C \sum_{i} \int_{r_0}^\infty |\partial_r u_{i}|^2 (f(r))^{k}\d r\\
  \geq & C' \sum_{i} \int_{r_0}^\infty |\partial_r u_{i}(r_0)|^2 e^{\left(ck+2\alpha_{i}^- -2\epsilon\right) (r-r_0)}\d r\\
  \geq & C'' \sum_{i} |\partial_r u_{i}(r_0) |^2 \frac{1}{-2\alpha_{i}^- -ck+2\epsilon}>0
 \end{align*}
 for positive constants $C$, $C'$, $C''$.
 
 Now we can estimate $\partial_r u$ by using the triangle inequality, Cauchy-Schwarz inequality und the estimates from above:
 
 \begin{align*}
  |\partial_r u|\leq &\sum_{i} |\partial_r u_{i}(r_0)|e^{(\alpha_i^-+\epsilon)(r-r_0)}\\
  \leq & \Bigg(\underbrace{\sum_{i} |\partial_r u_{i}(r_0)|^2 \frac{1}{-2\alpha_i^--ck+2\epsilon}}_{<\infty}\Bigg)^\frac{1}{2}\!\!  \left(\sum_{i} (-2\alpha_i^--ck+2\epsilon)e^{(2\alpha_i^-+2\epsilon)(r-r_0)}\right)^\frac{1}{2}\\
  \leq & C e^{2(\alpha_{0}^-+\epsilon)r}\Bigg(\underbrace{\sum_{i} (-2\alpha_i^-+2\epsilon)-e^{2(\alpha_i^--\alpha_{0}^-)(r-r_0)}  
   }_{=:A}\Bigg)^\frac{1}{2}.
\end{align*}

  If we can show that $A$ is bounded, then the upper bound is established for $-\alpha_-:=\alpha_0^-$: 
Since $\alpha_{i}^->\alpha_{0}^-$ for all $i>0$, it is enough to bound $\sum_{i} (-2\alpha_i^- +2\epsilon)e^{2\alpha_i^-(r-r_0)}$.

Let $N(x):=\left|\{ l\ |\ \lambda_l\leq x \}\right|$. Then,  $N(\mu_i-1)\leq i\leq N(\mu_i)$. Together with Weyl's asymptotic law
there is an $i_0>0$ and an $\delta>0$ such that for all  $i\geq i_0$, $x\geq \mu_{i_0}$
  we have 
  \[2\sqrt{\mu_i}\geq -\alpha_i^-\geq \sqrt{\mu_i}\ \text{and}\ |N(x) x^{-k/2}-c|\leq \delta\](where $c$ depends on the geometry of ${\mS^k}$).
  Thus,  $(c-\delta)\mu_i^{k/2}\leq N(\mu_i)\leq i+1\leq 2i$ and $(c+\delta)\mu_i^{k/2}\geq N(\mu_i)\geq i$ and we obtain for $r>r_1>r_0$
  \[ \sum_{i\geq i_0} (-2\alpha_i^- +2\epsilon)e^{2\alpha_i^-(r-r_0)}\leq 2 \sum_{i\geq i_0} (2\sqrt{\mu_i}+\epsilon) e^{-2\sqrt{\mu_i}(r_1-r_0)} \leq c_0 \sum_{i\geq i_0} (i^{1/k}+c_1) e^{-c_2 i^{1/k}}
  \]
   where $c_0, c_1, c_2$ are positive constants. The last sum converges since the integral $\int_R^\infty (x^{\frac{1}{k}}+c_1) e^{-c_2 x^{\frac{1}{k}}}\, dx$ is finite for $R>0$.
Thus, for $r\geq r_1$ the term $A$ is bounded.
 \end{proof}

 \subsection{Green function of the Dirac operator}
 
 Similarly as in the last subsection we want to estimate the Green function of the Dirac operator on $(M_Z, g)$. Again we use the notations and definitions of Subsection~\ref{subsec_Green}. 
 
As in \cite[Sect.~6]{ammann.grosse:p13a} we decompose the space of spinors restricted to $N\times {\mS^k} \times \{r_1\}$
into complex subspaces of minimal dimensions which are invariant under 
$D^{N}$, $D^{{\mS^k}}$, and $\pa_r\cdot$. Such spaces have a basis of the form $\psi$, 
$\pa_r\cdot\psi$, $P\psi$, and $\pa_r\cdot P\psi$, where
$\psi$ satisfies $D^{N}\psi=\lambda \psi$, $(D^{{\mS^k}})^2\psi =\rho^2\psi$, 
 and $P:=D^{{\mS^k}}/\rho$ with $\lambda,\rho \in \mR$. 
All these operations commute with parallel transport in $r$-direction,
so by applying parallel transport in $r$-direction
we obtain spinors
$\psi$, $\pa_r\cdot\psi$, $P\psi$, and 
$\pa_r\cdot P\psi$ on $Z= N\times {\mS^k} \times [a,\infty)$ with similar relations,
and the space of all spinors of the form
\begin{equation*}
  \phi= \phi_1(r)\psi + \phi_2(r)\pa_r\cdot \psi +  \phi_3(r) P\psi + \phi_4(r)
\pa_r\cdot P\psi
\end{equation*}
is preserved under the Dirac operator $D$ on $Z$.

Then the operators discussed above restricted to such a minimal subspace are 
represented by matrices and the equation $D\phi=0$ on $Z$ reads as, cp. \cite[Proof of Prop.~6.2]{ammann.grosse:p13a}, 
 
\begin{equation*}
   \Phi_{\lambda, \rho}'(r)=  \Bigg(\underbrace{A -\frac{kc}{2}\Id}_{=:J}  +\underbrace{ \frac{k}{2}\Big(c-\frac{f'(r)}{f(r)}\Big)\Id  +  \frac\rho{f(r)} B}_{:=G(r)}\Bigg)
   \Phi_{\lambda, \rho}(r).
\end{equation*}

where $\Id$ is the identity matrix and 
\begin{equation*}
   A:=     
   \begin{pmatrix}
       0  & \la & 0 & 0 \\
       \la & 0 & 0 & 0\\
       0 & 0 & 0 & -\la\\
       0 & 0 & -\la &  0 \\
   \end{pmatrix},
   \qquad 
  B:=   \begin{pmatrix}
       0  & 0 & 0 & 1 \\
       0  & 0 & 1 & 0 \\
       0  & 1 & 0 & 0 \\
       1  & 0 & 0 & 0 \\
   \end{pmatrix},
\end{equation*}
 
The eigenvalues of $J$ are $\beta_{\lambda,\pm}=-\frac{kc}{2}\pm \lambda$ -- both of them have multiplicity two. Thus, by Theorem~\ref{pert_ode} and since $\Phi_{\lambda,\rho}$ is $L^2$, for each $\epsilon>0$ there is $r_0>0$ such that 

\begin{align*} |\Phi_{\lambda,\rho}(r_0)| e^{(\Re \beta_{\lambda,-}-\epsilon)(r-r_0) }\leq&  |\Phi_{\lambda,\rho}(r)|
\leq |\Phi_{\lambda,\rho}(r_0)| e^{(\Re \beta_{\lambda,-}+\epsilon)(r-r_0)}.
 \end{align*}
 
 By Lemma~\ref{green_symm} only $\rho^2=\frac{k^2}{4}$ occurs, i.e., $\rho=\pm \frac{k}{2}$. Thus, we can proceed completely analogously as for the estimate of $\partial_r u$ in the proof of Lemma~\ref{GreenLapl} in order to obtain the upper decay of $\psi$ and $\partial_r \psi$. 
 Then, in total we obtain the following
 \begin{lemma}
\label{GreenD} Let $(M_Z^m, g)$ be as in Notation~\ref{def_M}. Let the Dirac operator $D^g$ of $(M_Z,g)$ be invertible, and let $\lambda_N^2$, $\lambda_N\geq 0$, be the lowest Dirac eigenvalue for the square of the Dirac operator $(D^N)^2$ on $(N, g_N)$.
Then for $\epsilon >0$ and $r$ large enough we get for $x=(x_N,x_{\mS},r)\in Z$ the estimates (using Definition~\ref{mass_d})
\[ |\psi(x)|\leq C_0 e^{-\beta r}  \text{\ and\ } |\nabla_r \psi(x)|\leq C_1 e^{-\beta r},\]
where $\beta= \frac{kc}{2}+\lambda_N$.   
 \end{lemma}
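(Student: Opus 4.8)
The plan is to obtain the stated bounds for $\psi$ and $\na_r\psi$ by mirroring the argument already carried out for $\pa_r u$ in the proof of Lemma~\ref{GreenLapl}, using the ODE system for $\Phi_{\lambda,\rho}$ derived above. First I would record the consequences of Lemma~\ref{green_symm}: since the Green section $\psi$ is $\Spin(k+1)$-equivariant, only the component with $(D^{\mS^k})^2\psi|_Z=\tfrac{k^2}4\psi|_Z$ survives, i.e.\ $\rho=\pm\tfrac k2$, so the $r$-dependence is governed entirely by the finitely many $B$-type matrices and by the spectrum of $D^N$. Decomposing $\psi|_{N\times\mS^k\times\{r_1\}}$ into the minimal $D^N$-, $D^{\mS^k}$-, $\pa_r\cdot$-invariant subspaces indexed by the Dirac eigenvalues $\la$ of $(N,g_N)$ (with $\la^2\ge\la_N^2$), each component $\Phi_{\la,\pm k/2}$ solves the displayed linear system $\Phi' =(J+G(r))\Phi$ with $J$ constant, $J$ having eigenvalues $\be_{\la,\pm}=-\tfrac{kc}2\pm\la$, and $G(r)\to0$.

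Next I would invoke Theorem~\ref{pert_ode} together with the fact that $\Phi_{\la,\rho}\in L^2$ (which forces selection of the decaying solutions, exactly as $u_{i,l}\in L^2$ did before): for every $\epsilon>0$ there is $r_0$ with
\[
|\Phi_{\la,\rho}(r_0)|\,e^{(\Re\be_{\la,-}-\epsilon)(r-r_0)}\le |\Phi_{\la,\rho}(r)|\le |\Phi_{\la,\rho}(r_0)|\,e^{(\Re\be_{\la,-}+\epsilon)(r-r_0)},
\]
where $\Re\be_{\la,-}=-\tfrac{kc}2-|\la|\le -\tfrac{kc}2-\la_N$. Then I would combine these componentwise bounds. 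The leading decay rate is $\be=\tfrac{kc}2+\la_N$, achieved by the component(s) realizing the lowest $|\la|$; all other components decay strictly faster. To pass from the componentwise bounds to a bound on the full spinor $\psi$ (and on $\na_r\psi$), I would reproduce the triangle-inequality plus Cauchy--Schwarz summation argument from the proof of Lemma~\ref{GreenLapl}: use $\int_Z|\psi|^2<\infty$ (and, after multiplying the equation by $\Phi$ and integrating, $\int_Z|\na_r\psi|^2<\infty$) to get $\sum_\la |\Phi_\la(r_0)|^2 (-2\Re\be_{\la,-}-ck)^{-1}<\infty$, and then bound the tail of $\sum_\la(-2\Re\be_{\la,-}+2\epsilon)e^{2\Re\be_{\la,-}(r-r_0)}$ via Weyl's law for the Dirac operator on $N$, which controls the counting function of the $\la$'s and makes the relevant sum converge for $r$ past some $r_1$. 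Absorbing the $\epsilon$-losses from finitely many (in fact all) components into a single $\epsilon$, and noting $\na_r\psi$ is extracted from the same $\Phi$-components with the same exponential rates, yields $|\psi(x)|\le C_0e^{-\be r}$ and $|\na_r\psi(x)|\le C_1e^{-\be r}$ for $r$ large, with $\be=\tfrac{kc}2+\la_N$.

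The routine but slightly delicate point is the interchange of the finite spinorial decomposition in the $\mS^k$-direction (only $\rho=\pm k/2$) with the infinite decomposition over $\Spec(D^N)$, and checking that the exponent picked up from the $G(r)$-perturbation — which contains both the $\tfrac k2(c-f'/f)$ term and the $\tfrac\rho f B$ term — genuinely tends to zero so that Theorem~\ref{pert_ode} applies uniformly; this follows from the standing assumptions $f\to\infty$ and $f'/f\to c$. The main obstacle, as in Lemma~\ref{GreenLapl}, is the convergence of the tail sum over the Dirac spectrum of $N$: one must know $\la_N>0$ is not needed, only that the eigenvalues $\la$ grow at the Weyl rate $\sim i^{1/n}$, so that $\sum_\la(|\la|+c_1)e^{-c_2|\la|(r_1-r_0)}<\infty$ for $r_1>r_0$; this is exactly the estimate appearing at the end of the previous proof, now with $N(x)$ the counting function for $(D^N)^2$ and $k/2$ replaced by $n/2$. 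Everything else is a verbatim transcription of the scalar case, which is why the statement is presented as an immediate corollary of the preceding discussion.
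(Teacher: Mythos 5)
Your argument is correct and reproduces the paper's own proof, which is left mostly implicit under ``proceed completely analogously'': the $\Spin(k+1)$-equivariance of $\psi$ (Lemma~\ref{green_symm}) forces $\rho=\pm\tfrac k2$, Theorem~\ref{pert_ode} applied to each $L^2$ block $\Phi_{\lambda,\rho}$ pins down the decaying solution with rate $\tfrac{kc}{2}+|\lambda|\ge\tfrac{kc}{2}+\lambda_N$, and the Cauchy--Schwarz/Weyl summation argument from Lemma~\ref{GreenLapl} then gives the uniform bound with $\beta=\tfrac{kc}{2}+\lambda_N$, with $\nabla_r\psi$ controlled by the same rate since $\Phi'=(J+G)\Phi$ and $J,G$ are bounded. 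Your side remark that the Weyl counting exponent should be $n/2$ rather than $k/2$ (reflecting $\dim N=n$) is apt and in fact also applies to the paper's scalar argument in Lemma~\ref{GreenLapl}.
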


\section{Proof of Theorem~\ref{mainthm}}

With the preparations on the Green function decay on $(M_Z, g)$ in the last two subsections we are now ready to apply Theorem~\ref{pos_gen}. 
 
\begin{theorem}\label{mainthm_2}
Let  $(M_Z^m,g)$ be a Riemannian spin manifold as described above, for a precise Definition see Notation~\ref{def_M}. 
If $m\geq 6$, assume additionally that there is a point $p\in M$ such that $(M_Z,g)$ is conformally flat on a neighbourhood of $p$.
We assume that the conformal Laplacian and the Dirac operator on $(M_Z, g)$ are invertible. 
Furthermore, let 
\begin{equation}\label{cond_main} \frac{kc(3-m)}{m-2} +\frac{m}{m-2}\Re \sqrt{b+\frac{\sup_N \scal_{N}}{a_m}}-\Re \sqrt{b+\frac{\inf_N \scal_{N}}{a_m}}<2\lambda_{N} \end{equation} where $\lambda_{N}^2$, $\lambda_{N}\geq 0$, is the smallest eigenvalue of the square of the Dirac operator, and 
\[ b:=\frac{kc^2(1-n)}{4(m-1)}.\] Then the mass of $(M_Z,g)$ is nonnegative.

Moreover, if the mass of $(M_Z,g)$ is zero, then $(M_Z,g)$ is conformally equivalent to  $\mS^{m-k-1}\times \mH^{k+1}$.
\end{theorem}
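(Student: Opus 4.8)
The plan is to verify the hypotheses of Theorem~\ref{pos_gen} for $(M_Z,g)$, with the radial coordinate on the model end playing the role of the exhaustion function, to feed in the decay estimates of Lemmas~\ref{GreenLapl} and~\ref{GreenD}, and then to extract the equality case from the conclusion of Theorem~\ref{pos_gen}.

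\emph{Set-up.} By Notation~\ref{def_M} there is a compact $K\subset M_Z$ such that $M_Z\setminus K$ is $\Spin(k+1)$-equivariantly isometric to $(Z,g_Z)=(N\times\mS^k\times(a,\infty),g_N+f(r)^2\sigma+\d r^2)$. I would take $R=r$, the radial coordinate on this end, so that $R\colon M_Z\setminus K\to(a,\infty)$ is smooth and proper, the level sets $S_r=N\times\mS^k\times\{r\}$ are smooth compact hypersurfaces, $B_r$ is compact with $\partial B_r=S_r$, and the outer unit normal is $\nu=\partial_r$. Since $M_Z$ has uniformly bounded curvature and positive injectivity radius (Remark following Notation~\ref{def_M}) and $L_g$, $D^g$ are invertible, Theorem~\ref{exist_Green_L} and its Dirac counterpart provide the Green functions, and Definition~\ref{mass_d} gives $u=\Gamma(\cdot,p_0)>0$ and $\psi=G(\cdot,p_0)\psi_0$. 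For $m\geq6$ I would take $p_0$ to be a point near which $g$ is conformally flat, which makes the mass $m_{p_0}$ well defined and supplies the conformal-flatness hypothesis required by Theorem~\ref{pos_gen}; for $m\leq5$ no such hypothesis is needed.

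\emph{Nonnegativity.} Lemma~\ref{GreenLapl} gives $u\geq C_0e^{-\alpha_+r}$, $|\partial_\nu u|\leq C_1e^{-\alpha_-r}$, and Lemma~\ref{GreenD} gives $|\psi|\leq C_0e^{-\beta r}$, $|\nabla_\nu\psi|\leq C_1e^{-\beta r}$ with $\beta=\frac{kc}{2}+\lambda_N$. Using $m=n+k+1$ and $a_m=4\frac{m-1}{m-2}$ one checks the algebraic identity
\[
\frac{k^2c^2}{4}+\frac{\scal_\pm-c^2k(k+1)}{a_m}=b+\frac{\scal_\pm}{a_m},\qquad b=\frac{kc^2(1-n)}{4(m-1)},
\]
so that $\alpha_\pm=\frac{kc}{2}\pm\epsilon+\Re\sqrt{b+\scal_\pm/a_m}$. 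Since $\scal_+\geq\scal_-$ and $t\mapsto\Re\sqrt t$ is nondecreasing, $\alpha_+>\alpha_-$, and using positivity of $L_g$ resp.\ invertibility of $D^g$ one gets $\alpha_->0$ and $\beta>0$ (immediate if $c>0$; if $c=0$ use $\inf_N\scal_N>0$, resp.\ $\lambda_N>0$). Substituting into condition \eqref{vgl} of Theorem~\ref{pos_gen} and collecting the $\frac{kc}{2}$-contributions turns it into
\[
\frac{kc(3-m)}{m-2}+\frac{m}{m-2}\Re\sqrt{b+\frac{\sup_N\scal_N}{a_m}}-\Re\sqrt{b+\frac{\inf_N\scal_N}{a_m}}+\Big(\frac{m}{m-2}+1\Big)\epsilon<2\lambda_N,
\]
which is exactly \eqref{cond_main} in the limit $\epsilon\to0$; as \eqref{cond_main} is a strict inequality it survives for all small $\epsilon>0$. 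Hence Theorem~\ref{pos_gen} applies and the mass of $(M_Z,g)$ is nonnegative.

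\emph{Equality case and main obstacle.} If the mass vanishes, Theorem~\ref{pos_gen} yields that $(M_Z\setminus\{p_0\},\tilde g)$ with $\tilde g=u^{4/(m-2)}g$ is flat and carries a basis of parallel spinors; in particular $(M_Z,g)$ is conformally flat. To identify the conformal class I would follow the scheme of \cite[\S5--6]{ammann.grosse:p13a}: the $\Spin(k+1)$-equivariance of $u$ and $\psi$ from Lemma~\ref{green_symm} passes to $\tilde g$ and the parallel spinors; the expansion \eqref{green_exp} makes $\tilde g$ asymptotically Euclidean near $p_0$; and flatness of $\tilde g=u(r)^{4/(m-2)}g_Z$ on the model end, where $u$ depends only on $r$, severely constrains $N$ and $f$ — forcing $N$ to be a round sphere $\mS^{m-k-1}$ and $f$ the hyperbolic profile up to rescaling — so that $(M_Z,g)$ is conformally equivalent to $\mS^{m-k-1}\times\mH^{k+1}$ (consistently, this product is conformally flat, being conformal to $\mS^m$ with a round $\mS^k$ deleted). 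The nonnegativity is a fairly direct consequence of Theorem~\ref{pos_gen} and the already-established Lemmas~\ref{GreenLapl} and~\ref{GreenD}, together with the elementary matching of \eqref{vgl} with \eqref{cond_main}; the genuine difficulty lies in this last step, i.e.\ upgrading "flat, parallel spinors, conformally flat" — in the presence of only the partial $\SO(k+1)$-symmetry and the asymptotically Euclidean end created at $p_0$ — to the precise global model, in particular ruling out all cross-sections other than the round sphere and all warpings other than the hyperbolic one.
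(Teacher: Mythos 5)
Your proof of the nonnegativity part is correct and is essentially the paper's argument: invoking Theorem~\ref{pos_gen} along the radial coordinate on the model end, feeding in Lemmas~\ref{GreenLapl} and~\ref{GreenD}, and verifying that the algebraic identity
\[
\frac{k^2c^2}{4}+\frac{-c^2k(k+1)}{a_m}=\frac{kc^2(1-n)}{4(m-1)}=b
\]
(using $m=n+k+1$) turns \eqref{vgl} into \eqref{cond_main} as $\epsilon\to0$; this matches the paper line for line.

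The gap is in the rigidity case, which you yourself flag as ``the genuine difficulty'' but then sketch in a way that does not hold up. You assert that flatness of $\tilde g=u(r)^{4/(m-2)}g_Z$ on the end ``forc[es] $N$ to be a round sphere $\mS^{m-k-1}$.'' That is not what conformal flatness of $g_Z$ gives: by the paper's Lemma~\ref{confflat_prod}, conformal flatness of the product only forces constant sectional curvature on the factors, and therefore also allows the quotients $\mS_c^n/G$ (and $\mS_c^{m-1}/G$, $\mH_c^{m-1}/G$, $\mR^n/G$) for finite $G$. In the paper, ruling these out is a nontrivial extra step: cases 1 and 4 are discarded because $L_g$ would not be invertible, and for the remaining spherical cases one passes to the $G$-cover $\hat M$, checks that $\hat M$ again satisfies the hypotheses of the theorem, and invokes Schoen--Yau's \cite[Prop.~4.3]{schoen_yau_88} to get $0=m_{p_0}>m_{\hat p_0}\geq 0$ when $G$ is nontrivial. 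Your sketch never engages with this possibility. Moreover, even once $Z=\mS_c^n\times(\mH_c^{k+1}\setminus\bar B_a)$ (or $\mS_c^{m-1}\times[a,\infty)$) is established, the paper still needs the conformal compactification $\tilde g=h^2g$ filling in a totally geodesic $\mS^k$, a removable-singularity argument for the compactified Green function $\tilde u=h^{-(m-2)/2}u$, the conclusion that $(\widetilde M\setminus\{p_0\},\tilde u^{4/(m-2)}\tilde g)$ is flat, complete and asymptotically Euclidean hence $\mR^m$, and finally Liouville's theorem to pin down the conformal diffeomorphism to $\mS^m$ and hence to $\mS^{m-k-1}\times\mH^{k+1}$. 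None of these steps appear in your outline, so the equality case remains unproved as written.
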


\begin{proof} First we note that $n>1$, since otherwise $L_g$ cannot be invertible. By Lemmata~\ref{GreenLapl} and~\ref{GreenD} we can apply Theorem~\ref{pos_gen} 
for 
\begin{align*}\alpha_1=& \frac{kc}{2}+\epsilon +\Re\sqrt{\frac{k^2c^2}{4}+\frac{\sup_N \scal_N-c^2k(k+1)}{a_m}}\\                                                                                                                                       \alpha_2=& \frac{kc}{2}-\epsilon +\Re\sqrt{\frac{k^2c^2}{4}+\frac{\inf_N \scal_N-c^2k(k+1)}{a_m}}\\
\beta=&\frac{kc}{2}+\lambda_N.
                                                                                   \end{align*}
 Since $\epsilon$ can be chosen arbitrarily small, \eqref{cond_main} is exactly the condition assumed in \eqref{vgl}. Thus, we obtain that the mass at $p_0$ is nonnegative.
  
Assume now that the mass is zero. Then, by Theorem~\ref{pos_gen} $M\setminus\{p_0\}$ is conformally flat.  By Lemma~\ref{confflat_prod} one of the following cases occurs (where $G$ is always finite):
\begin{center}
\begin{framebox}{
\begin{tabular}{c|c|c|c}
& $N$ & $Z$ & \\
\hline
&&&\\
1. & $\mR^n/G$ & $(\mR^n/G\times \mS^{k}\times (a,\infty), g_E+ c^2r^2\sigma^k +\d r^2) $ & $c\in [0,1]$, $n>1$ \\
2.& $\mS_c^n/G$ & $(\mS_c^n/G\times  \mS^{k}\times (a,\infty), \sigma_c^n+ \sinh_c(r)^2\sigma^k +\d r^2)$ & $n>1$, $c>0$\\
3.& $\mS_c^{m-1}/G$ & $\mS_c^{m-1}/G\times (a,\infty)$ &  $c>0$\\
4.& $\mH_c^{m-1}/G$ & $\mH_c^{m-1}/G\times (a,\infty)$ & $c>0$\\
\end{tabular}}\end{framebox}
\end{center}
The first and fourth case are excluded since in that case the conformal Laplacian on $M_Z$ is not invertible.

We now consider the second case: $Z=\mS_c^n/G\times (\mH_c^{k+1}\setminus \bar{B}_a(0))$ for $G$ finite, $n>1$ and $c>0$. Let $(\hat{M}= \mS_c^n\times (\mH_c^{k+1}\setminus \bar{B}_a(0)), \hat{g})$ be the $G$-cover of $M$. The corresponding projection is denoted by $\pi\colon \hat{M}\to M$. Then, $\hat{M}$ still fulfills the assumptions in Theorem~\ref{exist_Green_L} and, thus,  $L_{\hat{g}}$ possesses a Green function $\hat{\Gamma}$.  Since $\lambda_N=\lambda_{\mS^n}$, \cite[p. 62]{baer_diss}, $\hat{M}$ also fullfills the assumptions of this theorem. Thus, from above we know that the mass of $\hat{M}$ at $\hat{p}_0$ with $\pi(\hat{p}_0)=p_0$ is nonnegative. On the other hand together with \cite[Prop. 4.3]{schoen_yau_88} we obtain for nontrivial $G$ that $0=m_{p_0}>m_{\hat{p}_0}\geq 0$ which gives a contradiction. Hence, $G$ is the trivial group. Analogously, we obtain in the third case that $G$ is trivial. Summarizing  we know that $Z=\mS_c^n\times(\mH_c^{k+1}\setminus \bar{B}_a(0))$ with $c>0$, $n>1$ and $k\geq 0$
or $Z=\mS_c^{m-1}\times [a,\infty)$ for $c>0$.
In all those cases $(M_Z, g)$ is conformally compactifiable to a manifold $(\widetilde{M}, \tilde{g})$ by considering $\tilde{g}=h^2 g$ where $h(r)=\cosh^{-1}(r)$ for $r\geq a$, cp. \cite[Prop.~3.1]{ADH}, and where $\widetilde{M}\setminus M_Z$ is a totally geodesic $\mS^k$. Note that on a neighbourhood $U$ of the $\mS^k$ the metric $\tilde{g}$ is the standard metric on $\mS^m$. Set $s={\rm dist}_{\tilde{g}}(., \mS^k)$. Then, $\cosh^{-1}(r)=\sin s$ for $r\geq  a$.

Let $\tilde{u}=h^{-\frac{m-2}{2}}u$. Then, $L_{\tilde{g}}\tilde{u}=\delta_{p_0}$ on $\widetilde{M}\setminus \mS^k$. Since $u\in L^2(Z, g)$, $\int_{Z} \frac{1}{\sin^2 s}\tilde{u}^2\,\vo_{\tilde{g}}= \int_{Z} {u}^2\,\vo_{g}<\infty$ and, hence, $\int_{Z} \frac{1}{s^2}\tilde{u}^2\,\vo_{\tilde{g}}<\infty$. Thus, analogously as in \cite[Lem.~7.3]{ammann.grosse:p13b} we can remove the singularity. Thus, $\tilde{u}$ is the Green function of $(\widetilde{M}, \tilde{g})$ around $p_0$ and the mass of $(\widetilde{M}, \tilde{g})$ is zero. It follows that  $(\widetilde{M}\setminus \{p_0\}, \tilde{u}^{\frac{4}{m-2}}\tilde{g})$ is  flat, complete and asymptotically Euclidean. Thus, it has to be $\mR^m$. Thus, $M$ is conformally equivalent to $\mS^m$, i.e., there is a diffeomorphism $\phi\colon M\setminus\to \mS^m$ with $\phi^*\sigma^m= f^2g$ for some $f\in C^\infty(M,\mR_{>0})$. By Liouville's theorem  $\phi|_U\colon U\subset \mS^m\to \phi(U)\subset \mS^m$ is the restriction of a conformal transformation $\psi\colon \mS^m\to\mS^m$. Then, the diffeomorphism $\psi^{-1}\circ \phi\colon M\to \mS^m$ is the identity on $U$. Thus, on $U$ we have $\sigma^m= f^2\sigma^m$ and, hence, $f|_U=1$. Summarizing we obtain that $M_Z$ is conformally equivalent to $\mS^{m-k-1}\times \mH^{k+1}$.
\end{proof}

\begin{example}\label{ex_assump} Let $(M_Z^m= N^n\times \mH_c^{k+1}, g=g_N+g_c)$ where $(\mH_c^{k+1}=\mS^k\times \mR, g_c= \sinh_c(r)^2 \sigma^k+\d\, r^2)$ where $\sinh_c(r):= \frac{1}{c}\sinh(cr)$ for $c>0$ and $\sinh_0(r):=r$. Moreover, we assume that $\scal_N$ is constant, and that $N$ is closed.\\
Then, $L_g=\Delta_N+\Delta_{\mH_c^{k+1}}+\frac{\scal_N - c^2k(k+1)}{a_m}$. Since the scalar curvature of $M_Z$ is constant, we get that the spectrum of $L_g$ is given by $[d,\infty)$ with $d:=\frac{c^2k^2}{4}+\frac{\scal_N - c^2k(k+1)}{a_m}$. Thus, $L_g$ is invertible if and only if $d>0$ which is equivalent to \eqref{cond_main_1}. In particular, if $n>1$, then $\scal_N>0$, and if $n\in \{0,1\}$ this is not possible. Let now $n>1$. Since the whole real line is the spectrum of the Dirac operator $\mH_c^{k+1}$, the Dirac operator on $M_Z$ is invertible if and only if the Dirac operator on $N$ is. This is automatically fulfilled since $\scal_N>0$. 
Hence, 
if \begin{equation*} \frac{kc(3-m)}{m-2}+\frac{2}{m-2}\sqrt{b+\frac{\scal_N}{a_m}} <2\lambda_N\quad \text{with}\ b=\frac{kc^2(1-n)}{4(m-1)}\end{equation*}
is fulfilled, then Theorem~\ref{mainthm_2} applies to $M_Z= N\times \mH_c^{k+1}$. 
Since $m\geq 3$ and $b\leq 0$, 
we always have
\begin{equation*}  \frac{kc(3-m)}{m-2}+\frac{2}{m-2}\sqrt{b+\frac{\scal_N}{a_m}}\leq \frac{2}{m-2}\sqrt{\frac{\scal_N}{a_m}}\leq \frac{2}{\sqrt{(m-2)(m-1)}}\lambda_N<2\lambda_N
 \end{equation*}
where the second inequality uses the Schr\"odinger-Lichnerowicz formula for $N$.

In particular, for $M=\mH_c^{k+1}\times\mS^n$, $n>1$, and $m\geq 3$, all the assumptions of Theorem~\ref{mainthm_2} are fulfilled.
\end{example}

\begin{proof}[Proof of Theorem~\ref{mainthm}]
 Theorem~\ref{mainthm} follows directly from Theorem~\ref{mainthm_2} and Example~\ref{ex_assump}. 
\end{proof}

\section{Application to the Yamabe invariant}

\begin{lemma}\label{schoen_argu} Let $(M_Z^m,g)$ be as in Subsection~\ref{subsec_Green} and we use the notations therein. Assume that the Dirac operator on $(N, g_N)$ is invertible. Let $m=3,4$ or $5$. Let  $L_g$ be an invertible operatora and assume that  $\eqref{cond_main}$ holds.
  Then, $\Q(M_Z,g)<\Q(\mS^m, \sigma^m)$ unless $M_Z$ is conformally equivalent to  $\mS^n\times \mH^{k+1}$.
\end{lemma}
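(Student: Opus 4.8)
The plan is to compare the Yamabe constant of $(M_Z,g)$ with that of the round sphere by exhibiting a suitable test function built out of the conformal Green function. First I would recall the classical Schoen argument: if $(M_Z,g)$ has a point $p_0$ and an invertible conformal Laplacian with Green function $\Gamma(\cdot,p_0)=u$, then one can, in dimensions $m=3,4,5$ (where conformal normal coordinates give the expansion \eqref{green_exp}), use the truncated Green function $u_\varepsilon := $ ("bubble on a small ball glued to $\varepsilon u$ outside") as a test function and compute $Q^*_{M_Z}(u_\varepsilon)$. The standard computation — identical to \cite[p.~292 ff.]{Aubin} together with \cite{schoen_84}, adapted to the noncompact setting exactly as the Aubin argument for $m\ge 6$ was noted to carry over in the introduction — shows that
\[
\Q(M_Z,g)\le Q^*_{M_Z}(u_\varepsilon)=\Q(\mS^m,\sigma^m)-c_m\, m_{p_0}\,\varepsilon^{m-2}+o(\varepsilon^{m-2})
\]
for a positive dimensional constant $c_m>0$, where $m_{p_0}$ is the mass from Definition~\ref{mass_d}. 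Hence $\Q(M_Z,g)<\Q(\mS^m,\sigma^m)$ as soon as $m_{p_0}>0$, and $\Q(M_Z,g)\le\Q(\mS^m,\sigma^m)$ whenever $m_{p_0}\ge 0$.

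The second step is to invoke the positive mass statement. Under the hypotheses of the lemma — $m\in\{3,4,5\}$, $L_g$ invertible, $D^N$ invertible (hence $D^g$ invertible, since the Dirac spectrum of the hyperbolic-type factor is all of $\mR$ as noted in Example~\ref{ex_assump}), and the inequality \eqref{cond_main} — Theorem~\ref{mainthm_2} applies and gives $m_{p_0}\ge 0$. So the displayed estimate immediately yields $\Q(M_Z,g)\le\Q(\mS^m,\sigma^m)$, and strict inequality holds unless $m_{p_0}=0$.

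The third step handles the equality case $m_{p_0}=0$. By the rigidity part of Theorem~\ref{mainthm_2}, $m_{p_0}=0$ forces $(M_Z,g)$ to be conformally equivalent to $\mS^{m-k-1}\times \mH^{k+1}$, which after renaming $n=m-k-1$ is exactly the exceptional case $\mS^n\times\mH^{k+1}$ excluded in the statement. Therefore if $M_Z$ is \emph{not} conformally equivalent to $\mS^n\times\mH^{k+1}$ we must have $m_{p_0}>0$ and hence the strict inequality $\Q(M_Z,g)<\Q(\mS^m,\sigma^m)$. (In the conformally flat low-dimensional range $m=3,4,5$ the Green-function expansion \eqref{green_exp} is available at every point, so no extra conformal-flatness hypothesis near $p_0$ is needed beyond what is built into the cited results; this is why the lemma restricts to $m\le 5$.)

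The main obstacle is making the test-function estimate rigorous on a \emph{noncompact} manifold: one must check that $u_\varepsilon$, which equals $\varepsilon u=\varepsilon\Gamma(\cdot,p_0)$ outside a small ball, lies in the right space and that the integrals $\int_{M_Z}u_\varepsilon L_g u_\varepsilon\,\vo_g$ and $\|u_\varepsilon\|_p$ converge and behave as in the compact case. Convergence of these integrals at infinity is governed precisely by the decay estimate for $u=\Gamma(\cdot,p_0)$ from Lemma~\ref{GreenLapl} (together with positivity of $L_g$, which makes the Dirichlet-type energy finite and controls cross terms), so the argument of \cite{schoen_84} goes through verbatim once that decay is in hand. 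The rest is the routine Aubin--Schoen bubble computation, which I would not reproduce.
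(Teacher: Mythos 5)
Your proposal follows essentially the same route as the paper: apply Theorem~\ref{mainthm_2} to get mass $\geq 0$, use the rigidity part of that theorem to show that the mass vanishes only in the excluded case $M_Z\simeq\mS^n\times\mH^{k+1}$, and then run Schoen's test-function computation (Aubin bubble glued to a multiple of the Green function) to extract the strict inequality from positivity of the mass, using $\Gamma\in H_1^2$ (equivalently, the decay from Lemma~\ref{GreenLapl}) to ensure the integrals converge on the noncompact manifold. The paper's own proof is just a two-sentence version of this; you have simply made the role of the rigidity statement and of the decay estimates more explicit.
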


\begin{proof} By Theorem~\ref{mainthm} the mass is positive. In order to obtain $\Q(M_Z)<\Q(\mS^m)$ we then use the test function $\phi$ that was constructed by Schoen out of $\Gamma$, cp. \cite[p. 482]{schoen_84}. Since $\Gamma\in H_1^2$, the calculation remains completely the same.
\end{proof}

Note that the conformal Laplacian of $\mS^n\times \mH^{k+1}$ is invertible only if $n>1$.

\begin{repcorollary}{repcor} Let $m=n+k+1$, $m\geq 3$ $k>0$, and $c\in [0,1)$. Then
\[\Q(\mS^n\times \mH_c^{k+1},\sigma^n+g_c)<\Q(\mS^m, \sigma^m).\]
\end{repcorollary}

\begin{proof} For $m\geq 6$, $c\in [0,1)$ and $n>1$ the manifold $\mS^n\times \mH_c^{k+1}$ is not conformally flat. Thus, Aubin's construction mentioned in the introduction yields the result in this case. For $n>1$ and $m=3,4,5$ this follows from Theorem~\ref{schoen_argu} together with Example~\ref{ex_assump}.
For the remaining case $n=1$ the claim follows from $\Q(\mS^1\times \mH_c^{m-1},\sigma^n+g_c)=c^{\frac{2}{m}}\Q(\mS^m, \sigma^m)$, cp. \cite[Rem.~9.9]{ammann.grosse:p13b}.\end{proof}

\begin{appendix}

\section{Conformally flat Riemannian products}

\begin{lemma}\label{confflat_prod}
 Let $(M^m=M_1^{m_1}\times M_2^{m_2}, g_1+g_2)$ be a product manifold. Let $m_1,m_2\geq 1$ and $m=m_1+m_2\geq 3$. 
  
 Then, $M$ is conformally flat if and only if $M_1$ and $M_2$ both have constant sectional curvature and in case that $m_1, m_2>1$ these sectional curvature have the same magnitude and opposite sign.
 \end{lemma}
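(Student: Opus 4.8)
The plan is to reduce to the classical obstruction to conformal flatness: for $m\ge 4$ the metric is locally conformally flat if and only if its Weyl tensor $W$ vanishes, and for $m=3$ if and only if its Cotton tensor $C$ vanishes. Writing $P=\frac1{m-2}\bigl(\Ric-\frac{\scal}{2(m-1)}g\bigr)$ for the Schouten tensor and $\kuno$ for the Kulkarni--Nomizu product, this means that conformal flatness for any $m\ge 3$ is equivalent to $R=P\kuno g$ together with the Codazzi property of $P$ (i.e.\ $\na P$ totally symmetric, which is the vanishing of $C$): for $m\ge 4$ one has $W=R-P\kuno g$ and $\operatorname{div}W=(m-3)C$, so $W=0$ already forces $C=0$, while for $m=3$ the identity $R=P\kuno g$ is automatic and $W\equiv 0$. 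For the product $M=M_1\times M_2$ I record three facts: the curvature tensor splits as $R=R^{M_1}+R^{M_2}$ with all mixed components vanishing; the Ricci and hence the Schouten tensor are block diagonal, $P=P_1+P_2$; and the subbundles $TM_1,TM_2$ are parallel, so $g_1$ and $g_2$ are parallel $(0,2)$-tensors on $M$.

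\emph{Necessity.} Assume $M$ conformally flat, so $R=P\kuno g$ and $P$ is Codazzi. Evaluating $R=P\kuno g$ on a ``mixed'' quadruple $(X_1,Y_2,Z_1,V_2)$ with $X_1,Z_1\in TM_1$ and $Y_2,V_2\in TM_2$, the left-hand side vanishes, and a Kulkarni--Nomizu computation collapses the right-hand side to $-P_1(X_1,Z_1)\,g_2(Y_2,V_2)-P_2(Y_2,V_2)\,g_1(X_1,Z_1)$. Its vanishing for all such quadruples (taking $Z_1=X_1$ when $m_1=1$) forces $P_1=\lambda g_1$ and $P_2=-\lambda g_2$ for a single function $\lambda$ on $M$. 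Plugging this back and evaluating $R=P\kuno g$ on quadruples lying in a single factor yields $R^{M_1}=\lambda\, g_1\kuno g_1$ and $R^{M_2}=-\lambda\, g_2\kuno g_2$, so each factor has pointwise constant sectional curvature, equal up to a universal constant to $+\lambda$ and $-\lambda$ respectively; in particular, when $m_1,m_2>1$ the two curvatures have the same magnitude and opposite sign. Finally, since $g_1,g_2$ are parallel, $\na P=\na\lambda\otimes(g_1-g_2)$, and the Codazzi identity $(\na_XP)(Y,Z)=(\na_YP)(X,Z)$ with $Z$ in one factor and $X$ in the other gives $X\lambda=0$ along both factors; hence $\lambda$ is constant and both sectional curvatures are constant. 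If $m_1=1$ the first factor is one-dimensional, so ``constant sectional curvature'' is vacuous and there is no magnitude constraint, and the conclusion reduces to: $M_2$ has constant sectional curvature.

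\emph{Sufficiency.} Conversely, let $M_1,M_2$ have constant sectional curvatures $\kappa_1,\kappa_2$. A direct computation of the block-diagonal Schouten tensor gives $P_i=p_i g_i$ with $p_1+p_2=\frac{(m_1-1)(m_2-1)}{(m-1)(m-2)}(\kappa_1+\kappa_2)$, so under the hypothesis ($m_i=1$, or $m_1,m_2>1$ with $\kappa_1=-\kappa_2$) one has $P_1=\lambda g_1$, $P_2=-\lambda g_2$ with the same constant $\lambda$; running the component identities above in reverse shows $W=R-P\kuno g=0$, which settles $m\ge 4$. For $m=3$ we have $\{m_1,m_2\}=\{1,2\}$, $W\equiv 0$ automatically, and on the surface factor the Cotton tensor is $C_{abc}=\tfrac12\bigl((\pa_cK)(g_2)_{ab}-(\pa_bK)(g_2)_{ac}\bigr)$, which vanishes since its Gauss curvature $K$ is constant; hence $M$ is conformally flat. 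Alternatively, in the mixed-sign case one may simply invoke that $\mS^{m_1}_c\times\mH^{m_2}_c$ is conformally an open subset of $\mS^m$, via the conformal compactification used in the proof of Theorem~\ref{mainthm_2}, cf.\ \cite[Prop.~3.1]{ADH}, and that $\mS^{m-1}\times\R$ is conformal to $\R^m\setminus\{0\}$.

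The computations are routine; the one delicate point is the low-dimensional regime --- $m=3$, or more generally a two-dimensional factor --- where $W$ vanishes identically and carries no information, so one must instead use the Codazzi property of $P$ (equivalently $C=0$), together with the elementary observation that a function on $M_1$ which coincides with a function on $M_2$ must be constant. I expect organizing this case distinction cleanly, rather than any single estimate, to be the main work.
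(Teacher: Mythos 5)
Your proof is correct and reaches the same conclusion, but it is organized differently from the paper's. The paper treats $m\geq 4$ and $m=3$ by two genuinely separate computations: for $m\geq 4$ it works directly with the Weyl tensor, extracting first that the $M_i$ are Einstein with constant scalar curvature from the mixed components of $W$, and then constant sectional curvature from the within-factor components; for $m=3$ it computes the Cotton tensor in a local orthonormal frame and reads off that the surface factor has constant Gaussian curvature. You instead phrase conformal flatness uniformly as ``$R=P\kuno g$ plus the Codazzi property of the Schouten tensor $P$'', observing that for $m\geq 4$ the first identity is $W=0$ and forces the second via $\operatorname{div}W=(m-3)C$, while for $m=3$ the first is automatic and only the second carries content. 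The mixed quadruple then yields $P|_{TM_1}=\lambda g_1$, $P|_{TM_2}=-\lambda g_2$ for a single function $\lambda$; the within-factor components give pointwise constant sectional curvature; and the Codazzi identity cross-differentiated across the two parallel factors kills $d\lambda$. What this buys you is avoiding the explicit Cotton computation and making the $m=3$ case fall out of the same machine as $m\geq 4$; what you lose is a small amount of directness in the high-dimensional case, where the paper needs neither the Schouten tensor nor Codazzi. Both the necessity and the sufficiency steps check out, including the formula $p_1+p_2=\frac{(m_1-1)(m_2-1)}{(m-1)(m-2)}(\kappa_1+\kappa_2)$ used in the converse.
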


\begin{proof} For the only if direction we start with $m\geq 4$. Recall, that an $m\geq 4$-dimensional manifold is conformally flat if and only if its Weyl tensor 
 \[ W=R_g+\frac{\scal_g}{2(m-1)(m-2)} g\kuno g -\frac{1}{m-2} \Ric_g\kuno g\]
 vanishes. We denote the scalar, Ricci, and Riemannian curvature of $M_i$ by $\scal_i$, $\Ric^i$, and $R_i$, respectively.  Let $X\in TM_1$ and $Y\in TM_2$. We compute $W(X,Y,X,Y)$ and obtain
 
 \begin{equation}\label{prod_einst} 0= \frac{\scal_1+\scal_2}{(m-1)(m-2)}-\frac{1}{m-2}\left(\frac{ \Ric^1(X,X)}{\Vert X\Vert^2} +\frac{ \Ric^2(Y,Y)}{\Vert Y\Vert^2} \right).
 \end{equation}
 
 Summation over a basis of $TM_1$ and $TM_2$ gives
 
\[0= m_2(m_2-1)\scal_1+m_1(m_1-1)\scal_2.
 \]
 
 Thus, if $m_1, m_2>1$, then $\scal_i$ is constant for $i=1,2$ and $\scal_2=\frac{m_2(m_2-1)}{m_1(m_1-1)}\scal_1$.
 Moreover, then by \eqref{prod_einst} the $M_i$'s have constant Ricci curvature.  Taking now $X,Y\in TM_i$, $i=1$ or $2$, such that $X\perp Y$, we obtain by considering $W(X,Y,X,Y)$ that 
 $R_i(X,Y,X,Y)=\rm{const}\, g_i\kuno g_i(X,Y,X,Y)$. Thus, both $M_i$ even have constant sectional curvature.

In case $m_1=1$, then the curvatures of $M_1$ vanish and we obtain from \eqref{prod_einst} that $M_2$ is Einstein. If, moreover, we take $X,Y\in TM_2$ and obtain for $W(X,Y,X,Y)$ that
$R_2(X,Y,X,Y)=\frac{\scal_2}{2m(m-1)} g_2\kuno g_2(X,Y,X,Y)$ and, thus, that the sectional curvature is constant.
Thus we conclude the only if direction of the claim for $m\geq 4$.

It remains the case, that $m=3$. Then, $M$ is conformally flat if and only if its Cotton tensor vanishes. The Cotton tensor is given by $C_{ijk}=\nabla_k\left(\Ric^g_{ij}-\frac{1}{2(m-1)} \scal_g g_{ij}\right)-\nabla_j\left(\Ric^g_{ik}-\frac{1}{2(m-1)} \scal_g g_{ik}\right)$. W.l.o.g. let $m_1=1$ and $m_2=2$. Then, $\scal_g=\scal_2$, $\Ric^g=\Ric^2=\frac{1}{2}\scal_2g_2$. Let $\{\partial_i=\partial_j,\partial_k\}$ be a local orthonormal frame of $M_2$. Then
$0=C_{ijk}=\frac{1}{4}\partial_k\scal_2$. Thus, $\scal_2$ is constant.

The if-direction is checked analogously.
\end{proof}

\end{appendix}

\bibliographystyle{acm}

\end{document}